\crefname{equation}{}{}
\crefname{lem}{Lemma}{Lemmas}
\crefname{thm}{Theorem}{Theorems}
\newcommand{\dd}{\,{\rm d}}
\newcommand{\bs}{\boldsymbol}
\newcommand{\dual}[1]{\left\langle {#1} \right\rangle}
\newcommand{\proxi}[0]{ {\bf prox}}
\newcommand{\argmin}[0]{ {\mathrm {argmin}\,}}
\newcommand{\nm}[1]{\left\lVert {#1} \right\rVert}
\newcommand{\ssnm}[1]
{
	\left\vert\kern-0.25ex
	\left\vert\kern-0.25ex
	\left\vert
	{#1}
	\right\vert\kern-0.25ex
	\right\vert\kern-0.25ex
	\right\vert
}
\def\spher@harm#1{%
	\vbox{\hbox{%
			\offinterlineskip
			\valign{&\hb@xt@2\p@{\hss$##$\hss}\vskip.2ex\cr#1\crcr}%
		}\vskip-.36ex}%
}
\def\gshone{\spher@harm{.}}
\def\gshtwo{\spher@harm{.&.}}
\def\gshthree{\spher@harm{.&.&.}}
\let\gsh\spher@harm
\newtheorem{lem}{Lemma}[section]
\newtheorem{rem}{Remark}[section]
\newtheorem{thm}{Theorem}[section]
\newcounter{mnote}
\let\oldmarginpar\marginpar
\renewcommand\marginpar[1]
\def\@captype{table}\makeatother
\begin{document}
%
%
	
		\title{First order optimization methods based on Hessian-driven Nesterov accelerated gradient flow
		\thanks{Hao Luo was supported by the China Scholarship Council (CSC) joint Ph.D. student scholarship (Grant 201806240132).}
	}

	\author{	
	Long Chen \thanks{Email: chenlong@math.uci.edu},
	Hao Luo \thanks{Corresponding author. Email: galeolev@foxmail.com}. \\
}

\date{}

\maketitle
%
%
%

\begin{abstract}
A novel dynamical inertial Newton system, which is called Hessian-driven Nesterov accelerated gradient (H-NAG) flow is proposed. Convergence of the continuous trajectory are established via tailored Lyapunov function, and new first-order accelerated optimization methods are proposed from ODE solvers. It is shown that (semi-)implicit schemes can always achieve linear rate and explicit schemes have the optimal(accelerated) rates for convex and strongly convex objectives. In particular, Nesterov's optimal method is recovered from an explicit scheme for our H-NAG flow. Furthermore, accelerated splitting algorithms for composite optimization problems are also developed. 
\end{abstract}
	
%
	



\medskip\noindent{\bf Keywords:} 
Convex optimization, 
accelerated gradient method, 
composite, splitting, 
Hessian, differential equation solver, 
Lyapunov function.

\medskip\noindent{\bf AMS subject classification.} 
 37N40, 65B99, 65K05, 65P99, 90C25.
 
\section{Introduction}
\label{sec:intro}
In this paper, we introduce the Hessian-based Nesterov 
accelerated gradient (H-NAG) flow:
\begin{equation}\label{eq:ode-agf-H}
	\gamma x'' +(\gamma+\mu)x'+
	(1+\mu \beta + \gamma \beta')\nabla f(x) +\gamma\beta\nabla^2 f(x)x'=0,
\end{equation}
where $x = x(t)$ is a $V$-valued function of time variable $t$ and $(\cdot)'$ is the derivative taking respect to $t$, $f:V\to\mathbb R$ is a $\mathcal C^2$ and convex function defined on the Hilbert space $V$ and the damping coefficient $\gamma(t)$ is dynamically changing by $\gamma' = \mu - \gamma,\,\mu\geqslant 0$.
The additional damping coefficient $\beta(t)$ in front of the Hessian is nonnegative. Note that \cref{eq:ode-agf-H} belongs to the class of dynamical inertial Newton (DIN) system introduced recently in \cite{attouch_first-order_2019}.

When choosing vanishing damping $\beta = 0$, \cref{eq:ode-agf-H} reduces to Nesterov accelerated gradient (NAG) flow proposed in our recent work \cite{luo_chen_2019_from}
\begin{equation}\label{eq:ode-agf}
	\gamma x''+(\gamma+\mu)x'+\nabla f(x) = 0,
\end{equation}
or equivalently, the first-order ODE system
\begin{equation}\label{eq:NAG-sys}
	\left\{
	\begin{split}
		x' = {}&v-x,\\
		\gamma v' = {}&\mu(x-v)-\nabla f(x),\\
		\gamma ' = {}& \mu - \gamma.
	\end{split}
	\right.
\end{equation}
In \cite{luo_chen_2019_from}, the presented numerical discretizations with an extra gradient step for NAG flow \cref{eq:NAG-sys} lead to old and new accelerated schemes and can recover exactly Nesterov's optimal method \cite[Chapter 2]{Nesterov:2013Introductory} for both convex ($\mu = 0$)
and strongly convex cases ($\mu > 0$) in a unified framework. When applied to composite convex optimization, our methods can recover FISTA \cite{beck_fast_2009} for convex case and give new accelerated proximal gradient methods for strongly convex case. Compared to recent ODE models  \cite{Siegel:2019,Su;Boyd;Candes:2016differential,Wilson:2018}  for studying accelerated gradient methods which usually treat convex and strongly convex cases separately, our unified analysis in \cite{luo_chen_2019_from} is due to the introduction of the dynamic damping coefficient $\gamma' = \mu - \gamma$, which brings the effect of time rescaling.

When $\beta >0$, the added Hessian-driven damping in \cref{eq:ode-agf-H} will neutralize the possible transversal oscillation occurred in the accelerated gradient method; see \cite[Figure 1]{attouch_first-order_2019} for illustration. Particularly, if $\beta \gg 1$, then the flow behaves like the continuous Newton's flow \cite{alvarez_dynamical_1998}. A direct discretization based on Hessian is restrictive and expensive since requiring $f\in \mathcal C^2$ and the cost to compute the Hessian matrix and its inverse. 

Instead we will write \cref{eq:ode-agf-H} as a first order system
\begin{equation}\label{eq:HNAG}
	\left\{
	\begin{split}
		x' = {}&v-x-\beta\nabla f(x),\\
		\gamma v' = {}&\mu(x-v)-\nabla f(x),\\
				\gamma ' = {}& \mu - \gamma,
	\end{split}
	\right.
\end{equation}
in which Hessian disappears. This agrees with the most remarkable feature of  the dynamical inertial Newton model discovered in \cite{Alvarez;Attouch;Bolte;Redont:2002second-order}. Now \cref{eq:HNAG} is well defined for $f\in \mathcal C^1$ and 
can be further generalized to non-smooth setting by replacing gradient with sub-gradient \cite{attouch_second-order_2012}. 

\subsection{Main results}
We first consider smooth and $\mu$-convex ($\mu\geqslant 0$, cf. \eqref{eq:mu}) function $f$ with $L$-Lipschitz gradient. Let $(x(t), v(t),\gamma(t))$ be the solution of \cref{eq:HNAG} and denote by $x^*$ a global minimum point of $f$. By introducing the Lyapunov function
\begin{equation}\label{eq:Lt-intro}
\mathcal L(t) = f(x(t)) - f(x^*) + \frac{\gamma(t)}{2}\nm{v(t)-x^*}^2,\quad t\geqslant 0,
\end{equation}
we shall first establish the exponential decay property
\begin{equation}\label{eq:conv-Lt-HNAG}
\mathcal L(t)+	\int_{0}^{t}e^{s-t}\beta(s)
\nm{\nabla f(x(s))}^2
\!\dd s\leqslant e^{-t}\mathcal L(0).
\end{equation}

Then we propose several implicit and explicit schemes for \cref{eq:HNAG} to get a sequence of $\{ (x_k, v_k, \gamma_k)\}$ and establish the convergence  via the discrete analogue of \cref{eq:Lt-intro}
\[
\mathcal L_k={}
f(x_k)-f(x^*) +
\frac{\gamma_k}{2}\nm{v_k-x^*}^2,\quad k\geqslant 0.
\]

For a semi-implicit scheme (proximal method), we shall prove 
\[
\mathcal L_k+	\lambda_k\sum_{i=0}^{k-1}
\frac{\alpha_{i}^2}{\lambda_i\gamma_i}
\nm{\nabla f(x_{i+1})}^2
\leqslant\lambda_k\mathcal L_0,
\]
where the sequence $\{\lambda_k\}$ is defined by that
\begin{equation}\label{eq:lambdak-intro}
	\lambda_0=1,\quad\lambda_k = 
	\prod_{i=0}^{k-1}\frac{1}{1+\alpha_i},\quad k\geqslant 1.
\end{equation}
We easily obtain the linear convergence rate as long as 
the time step size $\alpha_k$ is bounded below,

Proximal method relies on a fast solver of a regularized problem which may not be available. To be practical, we propose an explicit scheme (cf. \cref{eq:ex-HNAG}) for solving \cref{eq:HNAG}. This scheme has been rewritten in the following algorithm style.
\begin{algorithm}[H]
	\caption{HNAG method for minimizing $f$}
	\label{algo:HNAG}
	\begin{algorithmic}[1] 
		\REQUIRE  $ \gamma_0>0$ and $x_0,v_0\in V$.
		\FOR{$k=0,1,\ldots$}
		\STATE Compute $\alpha_k, \beta_k$ by $\displaystyle \alpha_k = \sqrt{\frac{\gamma_k}{L}}, \quad \beta_k = \frac{1}{L\alpha_k}$.
		\smallskip
		\STATE Update $\displaystyle x_{k+1} = \frac{1}{1+\alpha_k} \big [ x_k + \alpha_k v_k - \alpha_k\beta_k \nabla f(x_k) \big ]$.
		\smallskip			
		\STATE Update $\displaystyle v_{k+1} = 
		\frac{1}{\gamma_k+\mu\alpha_k}
		\big [\gamma_kv_k+\mu \alpha_kx_{k+1}- \alpha_k \nabla f(x_{k+1}) \big]$.
		\smallskip
		\STATE Update $\gamma_{k+1} = (\gamma_k+\mu\alpha_k)/(1+\alpha_k)$.			
		\ENDFOR
	\end{algorithmic}
\end{algorithm}

We shall prove the convergence result for \cref{algo:HNAG}:
\begin{equation}\label{eq:conv-ex-acc}
	\mathcal L_k+\frac{1}{2L}\sum_{i=0}^{k-1}
	\frac{\lambda_k}{\lambda_i}\nm{\nabla f(x_{i})}^2
	\leqslant\lambda_k\mathcal L_0,
\end{equation}
where $\lambda_k$ is introduce by \cref{eq:lambdak-intro} and has the estimate
\begin{equation*}
	\lambda_k\leqslant \min\left\{
	8L\left(2\sqrt{2L}+\sqrt{\gamma_0}k\right)^{-2},\,
	\left(1+\sqrt{\min\{\gamma_0,\mu\}/L}\right)^{-k}
	\right\}.
\end{equation*}
Note that the above rate of convergence is optimal in the sense of the optimization complexity theory \cite{Nemirovskii1983,Nesterov:2013Introductory}. Furthermore \cref{eq:conv-ex-acc}  promises faster convergence rate for the norm of the gradient; see \cref{rem:norm-gd}.

In our recent work \cite{luo_chen_2019_from}, we verified that NAG method can be recovered from an explicit scheme for NAG flow \cref{eq:NAG-sys} with an extra gradient descent step which is not a discretization of the ODE \cref{eq:NAG-sys}. In this paper, we further show that NAG method is actually an explicit scheme for H-NAG flow \cref{eq:HNAG} {\it without} 
extra gradient step. From this point of view, our H-NAG model \cref{eq:HNAG} offers better explanation and understanding for Nesterov's accelerated gradient method than NAG flow \cref{eq:NAG-sys} does.

We finally propose a new splitting method (cf. \cref{eq:ex-HODE})
for composite convex optimization $f = h + g$. Here, the objective $f$ is $\mu$-convex with $\mu\geqslant 0$, $h$ is a smooth convex function with $L$-Lipschitz gradient and $g$ is convex but non-smooth. 

\begin{algorithm}[H]
	\caption{HNAG method for minimizing $f = h + g$}
	\label{algo:HNAG-comp}
	\begin{algorithmic}[1] 
		\REQUIRE  $\gamma_0>0$ and $x_0,v_0\in V$.
		\FOR{$k=0,1,\ldots$}
		\STATE Compute $\alpha_k, \beta_k$ by $\displaystyle \alpha_k 
		= \sqrt{\frac{\gamma_k}{L}}, \quad \beta_k = \frac{1}{L\alpha_k}$.
		\smallskip
		\STATE Set $\displaystyle z_{k} = \frac{1}{1+\alpha_k}\big[x_k + \alpha_k v_k - \alpha_k\beta_k \nabla h(x_k)\big]$.
		\smallskip
		\STATE Update $x_{k+1} =\proxi_{s g}(z_{k})$ with $s =\alpha_k\beta_k/(1+\alpha_k) $.
		\smallskip			
		\STATE Set $\displaystyle p_{k+1} = \frac{1}{\beta_k}\big [v_{k}-x_{k+1}-\beta_k\nabla h(x_{k})-(x_{k+1}-x_{k})/\alpha_k\big ]\in\partial g(x_{k+1})$.			
		\smallskip
		\STATE Update $\displaystyle v_{k+1} = 
		\frac{1}{\gamma_k+\mu\alpha_k}
		\big [\gamma_kv_k+\mu \alpha_kx_{k+1}-
		\alpha_k \nabla h(x_{k+1}) -\alpha_k  p_{k+1}\big]$.		
		\smallskip
		\STATE Update $\gamma_{k+1} = (\gamma_k+\mu\alpha_k)/(1+\alpha_k)$.						 
		\ENDFOR
	\end{algorithmic}
\end{algorithm}

Observe that \cref{algo:HNAG-comp} is almost identical to \cref{algo:HNAG} except we use proximal operator for the non-smooth convex function $g$. For any $\lambda>0$, the proximal operator $\proxi_{\lambda g}$ is defined by that \cite{attouch_variational_2014,parikh_proximal_2014}
\begin{equation*}
\proxi_{\lambda g}(x) = 
\inf_{y\in V}
\left(
g(y) + \frac{1}{2\lambda}\nm{x-y}^2
\right)\quad\forall\,x\in V.
\end{equation*}
For \cref{algo:HNAG-comp}, 
the following 
accelerated convergence rate has been established
\begin{equation*}
	\mathcal L_k
	\leqslant\mathcal L_0\times \min\left\{
	8L\left(2\sqrt{2L}+\sqrt{\gamma_0}k\right)^{-2},\,
	\left(1+\sqrt{\min\{\gamma_0,\mu\}/L}\right)^{-k}
	\right\},
\end{equation*}
and with an alternative choice for $\alpha_k$ and $\beta_k$, 
we can obtain faster convergence rate for the norm of 
(sub-)gradient; see \cref{rem:alter-split}.

\subsection{Related work and main contribution}
The most relevant works are \cite{attouch_first-order_2019,Shi:2018} where ODE models with Hessian driven damping are studied. We defer to \S \ref{sec:review} for a detailed literature review. 

We follow closely to \cite{attouch_first-order_2019,Shi:2018}. Namely we first analyze the ODE using a Lyapunov function, then construct optimization algorithms from numerical discretizations of this ODE, and use a discrete Lyapunov function to study the convergence of the proposed algorithms. 

Our main contribution is a relatively simple ODE model with dynamic damping coefficient $\gamma$ which can handle both the convex case ($\mu = 0$) and strongly convex case ($\mu > 0$) in a unified way. Our continuous and discrete Lyapunov functions are also relatively simple so that most calculation is straightforward. 

Another major contribution is a simplified Lyapunov analysis by introducing the strong Lyapunov property cf. \eqref{eq:LG}, which simplifies the heavy algebraic manipulation in~\cite{attouch_first-order_2019,Shi:2018,shi_acceleration_nodate,Su;Boyd;Candes:2016differential}. 
We believe our translation of results from continuous-time ODE to discrete algorithms is more transparent and helpful for the design and analysis of existing and new optimization methods. For example, we successfully developed splitting algorithms for composite optimization problems not restricted to a special case as considered in \cite{attouch_first-order_2019}.
\subsection{Function class}
Throughout this paper, assume $V$ is equipped with the inner product $(\cdot,\cdot)$ and the norm $\nm{\cdot}=(\cdot,\cdot)^{1/2}$.
We use 
$\dual{\cdot,\cdot}$ to denote the duality pair 
between $V^*$ and $V$, 
where $V^*$ is the dual space of $V$.  Denote by $\mathcal F_L^{1}$ the set of all convex functions $f\in\mathcal C^1$ with $L$-Lipschitz 
continuous gradient: 
\begin{equation*}
\|\nabla f(x) - \nabla f(y)\|_{*} 
\leqslant L\| x - y \| \quad \forall\, x,y \in V,
\end{equation*}
where $\nm{\cdot}_{*}$ denotes the 
dual norm on $V^*$. We say that $f$ is $\mu$-convex if there exists 
$\mu\geqslant 0$ such that
\begin{equation}\label{eq:mu}
f(x) - f(y) - \langle p, x - 
y \rangle  \geqslant \frac{\mu}{2} \| x- y \|^2
\quad \forall\,p\in  \partial f(y),
\end{equation}
for all $ x,y \in V$, where the sub-gradient $\partial f(y)$ 
of $f$ at $y\in V$ is defined by that
\begin{equation}\label{eq:sub}
\partial f(y) :=
\left\{p\in V^*:\,f(x)\geqslant f(y)+\dual{p,x-y}\quad
\,\forall\, x\in V\right\}.
\end{equation}
We use $\mathcal S_\mu^0$ to denote the set of all 
$\mu$-convex functions. In addition, we set $\mathcal S_{\mu}^{1} := \mathcal S_\mu^0\cap\mathcal C^{1}$ and 
$\mathcal S_{\mu,L}^{1,1} := \mathcal S_\mu^1\cap\mathcal F_L^{1}$.

\subsection{Literature review}\label{sec:review}
We first review some dynamical models involving Hessian data. 
In \cite{Alvarez;Attouch;Bolte;Redont:2002second-order}, combining the well-known continuous Newton method \cite{alvarez_dynamical_1998} and the heavy ball system \cite{polyak_methods_1964}, Alvarez et al. proposed the so-called dynamical inertial Newton (DIN) system
\begin{equation}\label{eq:DIN}
	x'' + \alpha x' + \beta\nabla^2f(x)x' + \nabla f(x) = 0,
\end{equation}
where $\alpha,\beta>0$ are constants and $f\in\mathcal C^2$ is bounded from below. Note that the Hessian term $\nabla^2f(x)x'$ is nothing but the derivative of the gradient $(\nabla f(x))'$. Hence the DIN system \cref{eq:DIN} can be transfered into a first-order system without Hessian
\[
\left\{
\begin{aligned}
y'={}& -(\alpha-1/\beta)x-y/\beta,\\
x'={}& -(\alpha-1/\beta)x-y/\beta-\beta\nabla f(x).
\end{aligned}
\right.
\]
For convex $f$, it has been proved \cite[Theorem 5.1]{Alvarez;Attouch;Bolte;Redont:2002second-order} that each trajectory of \cref{eq:DIN} weakly converges to a minimizer of $f$. Later on, in \cite{attouch_second-order_2012}, Attouch et al. extended the DIN system \cref{eq:DIN} to the composite case $f=h+g$:
\begin{equation*}
	x'' + \alpha x' + \beta\nabla^2h(x)x' + \nabla h(x) +\nabla g(x) = 0,
\end{equation*}
where $g\in \mathcal C^1$ and $f\in\mathcal C^2$ is convex such that $f=h+g$ is convex. 
Like the DIN system \cref{eq:DIN}, this model can also be rewritten as a first-order system
\begin{equation}\label{eq:DIN-attouch-comp}
\left\{
\begin{aligned}
x'={}&-\left(\alpha-1/\beta\right) x-y/\beta -\beta \nabla h(x), \\ 
y'={}&-\left(\alpha-1/\beta\right) x-y/\beta+\beta \nabla g(x),
\end{aligned}
\right.
\end{equation}
based on which they generalized their model to nonsmooth case as well. 

In \cite{attouch_first-order_2019}
Attouch et al. added the Hessian term and time scaling to the ODE derived in \cite{Su;Boyd;Candes:2016differential} and obtained 
\begin{equation}\label{eq:mf-Hessian-exten}
	x''+\frac{\alpha}{t} x'+\beta\nabla^2f(x)x'+b\nabla f(x)=0,
	\quad t\geqslant t_0,
\end{equation}
where $\alpha\geqslant1$ is a constant, $f\in\mathcal C^2$ is convex and $\beta(t)$ is a nonnegative function such that
\[
b(t)>\beta'(t)+\beta(t)/t,
	\quad t\geqslant t_0.
\] 
If $b = 1$, then \cref{eq:mf-Hessian-exten} reduces to the ODE consider in \cite{attouch_fast_2016}.
When $\beta(t)=0$, then \cref{eq:mf-Hessian-exten} coincides with the rescaled ODE derived in \cite{attouch_fast_2018}. When $\alpha=3$, $\beta(t) = \beta>0$ and $b(t) = 1+1.5\beta/t$, then \cref{eq:mf-Hessian-exten} recoveries the high resolution ODE \cref{eq:dan-ODE-2}. They derived the convergence result \cite[Theorem 2.1]{attouch_first-order_2019}
\begin{equation}\label{eq:conv-ode-attouch-mu0}
t^2w(t)(f(x(t))-f(x^*))+\int_{t_0}^{t}s^2\beta(s)
\nm{\nabla f(x(s))}^2\mathrm ds \leqslant C,
\end{equation}
provided that 
\[
w(t) = b(t)-\beta'(t)-\beta(t)/t,
\quad tw'(t)\leqslant (\alpha-3)w(t).
\]
However, due to the above restriction on $w$,
 we have $w(t)\leqslant Ct^{\alpha-3}$ and the best decay rate they can obtain is $O(t^{1-\alpha})$. In \cite{attouch_first-order_2019}, they also studied a DIN system for $f\in\mathcal C^2\cap\mathcal S_\mu^1(\mu>0)$:
\begin{equation}
\label{eq:ode-attouch-mu>0}
	x'' +2\sqrt{\mu}x'  + \beta\nabla f^2(x)x' +\nabla f(x) = 0,
\end{equation}
where $\beta\geqslant 0$ is a constant. Note that the case $\beta=0$ has been considered in \cite{Siegel:2019,Wilson:2018}. For $\beta>0$, they established the result
	\begin{equation}\label{eq:conv-ode-attouch-mu>0}
f(x(t))-f(x^*)+	\beta^2\int_{0}^{t}e^{\sqrt{\mu}(s-t)}
\nm{\nabla f(x(s))}^2
\!\dd s\leqslant C e^{-t\sqrt{\mu}/2}.
\end{equation}

Recently, Shi et al. \cite{Shi:2018} derived two Hessian-driven models, which were called high-resolution ODEs. One requires $f\in\mathcal C^2\cap\mathcal S_{\mu,L}^{1,1}$ with $\mu>0$ and reads as follows
\begin{equation}
\label{eq:ode-shi-mu>0}
	x'' +2\sqrt{\mu}x'  + \sqrt{\beta}\nabla f^2(x)x' 
	+ (1+\sqrt{\mu \beta})\nabla f(x) = 0,
\end{equation}
where $0<\beta\leqslant 1/L$. This ODE interprets \cite[Constant step scheme, III, Chapter 2]{Nesterov:2013Introductory} and achieves the exponential decay \cite[Theorem 1 and Lemma 3.1]{Shi:2018}
	\begin{equation}\label{eq:conv-ode-shi-mu>0}
f(x(t))-f(x^*)+	\sqrt{\beta}
\int_{0}^{t}e^{(s-t)\sqrt{\mu}/4}
\nm{\nabla f(x(s))}^2
\!\dd s\leqslant C e^{-t\sqrt{\mu}/4}.
\end{equation}
The second is for $f\in\mathcal C^2\cap \mathcal F_L^{1}$:
\begin{equation}\label{eq:dan-ODE-2}
	x'' +\frac{3}{t} x'+\sqrt{\beta} \nabla^{2} f(x) x'+
	\left(1+t_0/t\right) \nabla f(x)=0,\quad t
	\geqslant t_0 = 1.5\sqrt{\beta},
\end{equation}
where $\beta>0$. This model agrees with \cref{eq:mf-Hessian-exten} in a special case that  $\alpha=3$, $\beta(t) = \sqrt{\beta}$ and $b(t) = 1+1.5\sqrt{\beta}/t$, and the convergence result \cref{eq:conv-ode-attouch-mu0} in this case has also been proved by \cite[Lemma 4.1 and Corollary 4.2]{Shi:2018}. Compared with the dynamical systems derived in \cite{attouch_first-order_2019,Shi:2018}, our H-NAG flow \cref{eq:HNAG} uniformly treats $f\in\mathcal S_\mu^1$ with $\mu\geqslant 0$ and yields the convergence result \cref{eq:conv-Lt-HNAG} which, also gives the estimate for the gradient as what \cref{eq:conv-ode-attouch-mu0}, \cref{eq:conv-ode-attouch-mu>0} and \cref{eq:conv-ode-shi-mu>0} do. 

Optimization methods based on differential equation solvers for those systems above are also proposed. Based on a semi-implicit scheme for \cref{eq:DIN-attouch-comp}, Attouch et al. \cite{attouch_dynamical_2014} proposed an inertial forward-backward algorithm for composite convex optimization and established the weak convergence. In \cite{castera_inertial_2019}, Castera et al. applied the DIN system \cref{eq:DIN} to deep neural networks and presented an inertial Newton algorithm for minimizing the empirical risk loss function. Their numerical experiments showed that the proposed method performs much better than SGD and Adam in the long run and can reach very low training error. With minor change of \cref{eq:dan-ODE-2}, Shi et al. \cite{Shi:2018} developed a family of accelerated methods by explicit discretization scheme. Later in \cite{shi_acceleration_nodate} , for \cref{eq:ode-shi-mu>0}, they considered explicit and symplectic methods, among which only the symplectic scheme achieves the accelerated rate \cref{eq:opt-rate}. More recently, Attouch et al. \cite{attouch_first-order_2019} proposed two explicit schemes for \cref{eq:mf-Hessian-exten,eq:ode-attouch-mu>0}, respectively. However, only the discretization for \cref{eq:mf-Hessian-exten} has accelerated rate $O(1/k^2)$; see \cite[Theorem 3.3]{attouch_first-order_2019}. We emphasize that, our \cref{algo:HNAG,algo:HNAG-comp} possess the convergence rate
\begin{equation}\label{eq:opt-rate}
O\left(
\min
\left\{
1/k^2,\,\big(1+\sqrt{\mu/L}\big)^{-k}
\right\}
\right),
\end{equation}
which is optimal  for $f\in\mathcal S_\mu^1(\mu\geqslant 0)$ and 
accelerated for $f\in\mathcal S_\mu^0(\mu\geqslant 0)$. More methods that achieve the rate \cref{eq:opt-rate} are listed in Sections \ref{sec:HNAG-gd}, \ref{sec:methodfromNAG} and \ref{sec:HNAG-gm}.

The rest of this paper is organized as follows. 
In Section 
\ref{sec:conti} we
focus on the continuous problem \cref{eq:HNAG}. 
Then, in Sections \ref{sec:semi-im} and \ref{sec:explicit}
we consider (semi-)implicit and explicit schemes sequentially.
Then, we deal with the composite case $f=h+g$ in 
Section \ref{sec:split}. Finally, we give conclusion and future work in Section \ref{sec:con}.

\section{Continuous Problem}
\label{sec:conti}
In this section, we study our H-NAG flow for $f\in\mathcal S_\mu^1$($\mu\geqslant 0$) and establish the minimizing property of the trajectory. 
\subsection{Notation}
To move on and for later use, throughout this paper, we define the Lyapunov function $\mathcal L:
\textbf{\emph{V}}\to\mathbb R_{\geqslant 0}$ by that
\begin{equation}\label{eq:L}
\mathcal L(\bs x) =\mathcal L(x,v,\gamma) := f(x) - f(x^*) + \frac{\gamma}{2}\nm{v-x^*}^2.
\end{equation}
where $\bs x = (x,v,\gamma)\in  V\times V\times \mathbb R_+:=\textbf{\emph{V}}$, and $x^*$ is a global minimum point of $f$. 
When $\bs x(t)=(x(t),v(t),\gamma(t))$ is a $\textbf{\emph{V}}$-valued function of time variable $t$ on $[0,\infty)$, we also introduce the abbreviated notation
\begin{equation}\label{eq:Lt}
	\mathcal L(t) :=  \mathcal L\big(\bs x(t)\big)
	=\mathcal L\big(x(t),v(t),\gamma(t)\big),\quad t\geqslant 0.
\end{equation}
Note that $\mathcal L$ is convex with respect to $(x,v)$ and linear in $\gamma$. 
Moreover, $\mathcal L$ is whenever smooth in respect of $(v,\gamma)$ and it is trivial that
\begin{align*}
	\nabla_x \mathcal L & = \nabla f(x),\\
	\nabla_v \mathcal L & = \gamma ( v - x^*),\\
	\nabla_{\gamma} \mathcal L & = \frac{1}{2}\nm{v-x^*}^2.
\end{align*}
Above, $\nabla_{\!\times }$ means the partial derivative of $\times = x,\,v$ or $\gamma$. For any $\beta \in \mathbb R_{+}$ and $\bs x=(x,v,\gamma)\in \textbf{\emph{V}}$, we introduce the flow field $\mathcal G$
\begin{equation}\label{eq:G}
\mathcal G(\bs x,\beta):=
\big(\mathcal G^x(\bs x,\beta),\, \mathcal G^v(\bs x), \,\mathcal G^{\gamma}(\bs x)\big),
\end{equation}
where the three components are defined as follows
\begin{align*}
&	\mathcal G^x(\bs x,\beta)  = v - x - \beta \nabla f(x),\\
&	\mathcal G^v(\bs x)  = \frac{\mu}{\gamma}(x - v) - \frac{1}{\gamma} \nabla f(x),\\
&	\mathcal G^{\gamma} (\bs x) = \mu - \gamma.
\end{align*}

Our H-NAG system \cref{eq:HNAG} can be simply written as 
\begin{equation}\label{eq:shortHNAG}
	\bs x'(t) = \mathcal G(\bs x(t),\beta(t)),
\end{equation}
where $\bs x(t)=(x(t),v(t),\gamma(t))$. We find that $\bs x^*=(x^*, x^*, \mu)$ is a candidate of the equilibrium point to the dynamic system \cref{eq:shortHNAG}. 

The well-posedness of \cref{eq:shortHNAG} is standard. Indeed, if $f$ has Lipschitz continuous gradient, then apply the classical existence and uniqueness results of 
ODE (see~\cite[Theorem 4.1.4]{Ahmad2015}) yields that the ODE system \cref{eq:shortHNAG} admits a unique solution $\bs x =(x,v,\gamma)$ 
with $x\in\mathcal C^2([0,\infty);V)$ and $v\in\mathcal C^1([0,\infty);V)$.

\subsection{Strong Lyapunov property}
Originally the Lyapunov function is used to study the stability of an equilibrium point of a dynamical system. The function $\mathcal L(\bs x)$ defined by \cref{eq:L} is called a Lyapunov function of the vector field $\mathcal G(\bs x,\beta)$ \cref{eq:G} near an equilibrium point $\bs x^*$  if $\mathcal L(\bs x^*) = 0$ and
\begin{equation}\label{eq:Ly-cond}
-\nabla \mathcal L(\bs x)\cdot 
\mathcal G(\bs x,\beta)\text{ is locally positive near } \bs x^*.
\end{equation}
 
To obtain the convergence rate, we need a stronger condition than merely $-\nabla \mathcal L(\bs x)\cdot \mathcal G(\bs x,\beta)$ is locally positive definite. We introduce the strong Lyapunov property: there exist a positive function $c(\bs x)> 0$, and a function $q(\bs x):\textbf{\emph{V}}\to\mathbb R$ such that 
\begin{equation}\label{eq:A}
	-\nabla \mathcal L(\bs x)\cdot \mathcal G(\bs x,\beta)
	\geqslant
	c(\bs x)\mathcal L(\bs x)+q^2(\bs x)
	\quad \forall\,\bs x\in \textbf{\emph{V}}.
\end{equation}

Next we will show the Lyapunov function \eqref{eq:L} satisfies the strong Lyapunov property. 
\begin{lem}\label{lem:LG}
Assume $f\in\mathcal S_\mu^1(\mu\geqslant 0)$. For any $\beta \in \mathbb R_{+}$ and $\bs x =(x,v,\gamma)\in \textbf{V}$, we have
	\begin{equation}\label{eq:LG}
		-\nabla \mathcal L(\bs x)\cdot\mathcal G(\bs x,\beta)
		\geqslant \mathcal L(\bs x)+ \beta \|\nabla f(x)\|^2 + \frac{\mu}{2}\nm{x-v}^2.
	\end{equation}
\end{lem}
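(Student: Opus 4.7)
The plan is to expand $-\nabla\mathcal L(\bs x)\cdot \mathcal G(\bs x,\beta)$ directly from the explicit formulas for the partial derivatives of $\mathcal L$ and the three components of $\mathcal G$, then reorganize the resulting terms so that three groups emerge: a Bregman-type piece $\nabla f(x)\cdot(x-x^*)$, a $\beta$-piece $\beta\|\nabla f(x)\|^2$ from the $x$-component, and a purely quadratic piece in $v-x^*$ and $v-x$ produced by the $\mu$-terms in $\mathcal G^v$ and the $\gamma$-equation.

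More precisely, I would first write
\begin{equation*}
-\nabla\mathcal L\cdot\mathcal G = -\nabla f(x)\cdot\mathcal G^x - \gamma(v-x^*)\cdot\mathcal G^v - \tfrac12\|v-x^*\|^2\,\mathcal G^{\gamma},
\end{equation*}
substitute the formulas, and collect the cross term $\nabla f(x)\cdot(x-v) + \nabla f(x)\cdot(v-x^*)$ which telescopes to $\nabla f(x)\cdot(x-x^*)$. The $\beta$-contribution produces the desired $\beta\|\nabla f(x)\|^2$ immediately, and the contributions of $\mathcal G^\gamma = \mu - \gamma$ cancel the $\gamma$ coming from $\mathcal G^v$ to leave a clean $\tfrac{\gamma}{2}\|v-x^*\|^2$ that matches the $\gamma$-part of $\mathcal L(\bs x)$.

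Next I would apply the $\mu$-convexity inequality \eqref{eq:mu} at $y = x$, $x = x^*$, giving
\begin{equation*}
\nabla f(x)\cdot(x-x^*)\;\geqslant\; f(x)-f(x^*)+\tfrac{\mu}{2}\|x-x^*\|^2,
\end{equation*}
which supplies both $f(x)-f(x^*)$ (completing the functional part of $\mathcal L(\bs x)$) and a leftover $\tfrac{\mu}{2}\|x-x^*\|^2$. Together with the remaining $\mu$-terms $\mu(v-x^*)\cdot(v-x) - \tfrac{\mu}{2}\|v-x^*\|^2$ coming from $\mathcal G^v$ and $\mathcal G^\gamma$, this gives
\begin{equation*}
\tfrac{\mu}{2}\|x-x^*\|^2 + \mu(v-x^*)\cdot(v-x) - \tfrac{\mu}{2}\|v-x^*\|^2 \;=\; \tfrac{\mu}{2}\|x-v\|^2,
\end{equation*}
which is the polarization identity applied to the vectors $a=x-x^*$ and $b=v-x^*$ (so that $v-x=b-a$).

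The step I expect to be the only real obstacle is this last algebraic collapse of the three $\mu$-terms into the single perfect square $\tfrac{\mu}{2}\|x-v\|^2$; once that identity is spotted the rest is just bookkeeping of cross terms. Everything else is a direct substitution plus one application of $\mu$-convexity, and the inequality in \eqref{eq:LG} becomes an equality except at the use of \eqref{eq:mu}.
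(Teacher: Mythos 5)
Your proposal is correct and follows essentially the same route as the paper: expand $-\nabla\mathcal L(\bs x)\cdot\mathcal G(\bs x,\beta)$ term by term, apply the $\mu$-convexity inequality \eqref{eq:mu} between $x$ and $x^*$, and collapse the remaining $\mu$-terms via the identity $2\dual{x-v,v-x^*}=\nm{x-x^*}^2-\nm{x-v}^2-\nm{v-x^*}^2$, which is exactly your perfect-square regrouping. The only (harmless) imprecision is the phrase about $\mathcal G^\gamma$ ``cancelling the $\gamma$ from $\mathcal G^v$'': the $\gamma$ in $\mathcal G^v$ is cancelled by the factor $\gamma$ in $\nabla_v\mathcal L$, while $\mathcal G^\gamma$ supplies the $\tfrac{\gamma-\mu}{2}\nm{v-x^*}^2$ term, but your final bookkeeping is consistent with this.
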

\begin{proof}
	Indeed, observing the identity 
	\[
	2\dual{x-v,v-x^*}=\nm{x-x^*}^2-\nm{x-v}^2-\nm{v-x^*}^2,
	\]
	and using the convexity of $f$
	$$
	\dual{\nabla f(x),x-x^*} \geqslant f(x) - f(x^*) + \frac{\mu}{2} \nm{x-x^*}^2,
	$$
	a direct computation gives 
	\begin{equation*}
		\begin{split}
			-\nabla \mathcal L(\bs x) \cdot \mathcal G(\bs x,\beta) ={}& - \mu\dual{x-v,v-x^*}+\dual{\nabla f(x),x-x^*} \\
			{}&\qquad+ \beta\nm{\nabla f(x)}^2+ \frac{\gamma - \mu}{2}\nm{v-x^*}^2\\
			\geqslant{} & \mathcal L(\bs x)+ \beta \nm{\nabla f(x)}^2 + \frac{\mu}{2}\nm{x-v}^2.
		\end{split}
	\end{equation*} 
	This finishes the proof of this lemma.
\end{proof}

When $f$ is nonsmooth, we introduce the notation $\partial \mathcal L(\bs x, p) =\big(p,\nabla_v \mathcal L(\bs x),\nabla_\gamma \mathcal L(\bs x)\big)$ and $\mathcal G(\bs x, \beta,p)$ by replacing $\nabla f(x)$ in $\mathcal G$ with some $p\in\partial f(x)$. Namely we substitute $\nabla f(x)$ in $\nabla _x\mathcal L$ and $\mathcal G$ with some $p\in\partial f(x)$, where the sub-gradients $\partial f(x)$ of $f$ is defined in \cref{eq:sub}.
Then we can easily generalize \cref{lem:LG} to the non-smooth version. 

\begin{lem}\label{lem:LG-non}
Assume $f\in\mathcal S_\mu^0(\mu\geqslant 0)$. 
Then for any $\beta \in \mathbb R_{+},\,\bs x=(x,v,\gamma)\in \textbf{V}$ and $p\in \partial f(x)$, we have
	\begin{equation}\label{eq:LG-non}
		-\partial \mathcal L(\bs x, p) 
		\cdot \mathcal G(\bs x,\beta, p)
		\geqslant \mathcal L(\bs x)+ \beta \| p \|^2 + \frac{\mu}{2}\nm{x-v}^2.
	\end{equation}
\end{lem}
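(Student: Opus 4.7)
The plan is to mirror the proof of \cref{lem:LG} essentially line by line, with the sole difference that every occurrence of $\nabla f(x)$ is replaced by the chosen subgradient $p\in \partial f(x)$. Since the definition of $\mu$-convexity in \cref{eq:mu} is already formulated for arbitrary $p\in\partial f(y)$, the crucial inequality we need is available verbatim.

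First I would expand the dot product $-\partial \mathcal L(\bs x,p)\cdot \mathcal G(\bs x,\beta,p)$ using the components
\[
\partial \mathcal L(\bs x,p)=\bigl(p,\,\gamma(v-x^*),\,\tfrac12\nm{v-x^*}^2\bigr),\qquad
\mathcal G(\bs x,\beta,p)=\bigl(v-x-\beta p,\,\tfrac{\mu}{\gamma}(x-v)-\tfrac{1}{\gamma}p,\,\mu-\gamma\bigr).
\]
After the $1/\gamma$ cancellations, the cross term $\dual{v-x^*,p}$ combines with $\dual{p,x-v}$ to give $\dual{p,x-x^*}$, and we are left with the familiar expression
\[
-\partial \mathcal L(\bs x,p)\cdot\mathcal G(\bs x,\beta,p)
= \dual{p,x-x^*}-\mu\dual{x-v,v-x^*}+\beta\nm{p}^2+\frac{\gamma-\mu}{2}\nm{v-x^*}^2,
\]
which is precisely the formula appearing in the proof of \cref{lem:LG} but with $p$ in place of $\nabla f(x)$.

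Next I would apply the two ingredients that powered the smooth proof. The $\mu$-convexity inequality \cref{eq:mu} applied at $y=x$, $x\mapsto x^*$ with $p\in\partial f(x)$ yields
\[
\dual{p,x-x^*}\geqslant f(x)-f(x^*)+\frac{\mu}{2}\nm{x-x^*}^2,
\]
and the algebraic identity $2\dual{x-v,v-x^*}=\nm{x-x^*}^2-\nm{x-v}^2-\nm{v-x^*}^2$ gives
\[
-\mu\dual{x-v,v-x^*}=-\frac{\mu}{2}\nm{x-x^*}^2+\frac{\mu}{2}\nm{x-v}^2+\frac{\mu}{2}\nm{v-x^*}^2.
\]
Adding these and combining the two $\nm{v-x^*}^2$ contributions into $\frac{\gamma}{2}\nm{v-x^*}^2$ makes the $\frac{\mu}{2}\nm{x-x^*}^2$ term cancel, and the remaining pieces assemble exactly into $\mathcal L(\bs x)+\beta\nm{p}^2+\frac{\mu}{2}\nm{x-v}^2$, which is the claimed bound \cref{eq:LG-non}.

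There is essentially no real obstacle: the whole argument is structural and depends only on the subgradient version of $\mu$-convexity, which is the standing definition in \cref{eq:mu}. The only point to watch is that in the non-smooth setting one must use the \emph{same} subgradient $p$ consistently both inside $\partial \mathcal L$ and inside $\mathcal G$, as is already encoded in the notation $\mathcal G(\bs x,\beta,p)$ and $\partial \mathcal L(\bs x,p)$. This consistency guarantees that the cross term collapses to $\dual{p,x-x^*}$ and makes \cref{eq:mu} directly applicable.
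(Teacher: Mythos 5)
Your proposal is correct and follows exactly the route the paper intends: the paper gives no separate proof for \cref{lem:LG-non}, stating only that it is an easy generalization of \cref{lem:LG}, and your argument is precisely that generalization — replace $\nabla f(x)$ by the fixed subgradient $p$ throughout, note that the $\mu$-convexity inequality \cref{eq:mu} is already stated for arbitrary $p\in\partial f(y)$, and repeat the same algebraic identity and cancellations. Nothing further is needed.
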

\subsection{Minimizing property}
The crucial inequality \cref{eq:LG} implies that $\mathcal G$ is a descent direction for minimizing $\mathcal L$ and thus $\mathcal L$ and $\nm{\nabla f}$ decrease along the trajectory defined by \cref{eq:shortHNAG}. Indeed, we have the following theorem that depicts this.
\begin{thm}
	Let $\bs x(t)=(x(t), v(t), \gamma(t))$ be the solution of \cref{eq:shortHNAG},
	then for any $t\geqslant 0$,
	\begin{equation}\label{eq:conv-Lt}
		\mathcal L(t)+	\int_{0}^{t}e^{s-t}\beta(s)
		\nm{\nabla f(x(s))}^2
		\!\dd s\leqslant e^{-t}\mathcal L(0).
	\end{equation}
\end{thm}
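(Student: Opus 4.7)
The plan is to differentiate $\mathcal L(t)$ along the trajectory and use the strong Lyapunov property \cref{eq:LG} as the engine, then absorb the resulting differential inequality with a standard integrating factor.

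First, since $\bs x(t) = (x(t),v(t),\gamma(t))$ is differentiable in $t$ and satisfies $\bs x'(t) = \mathcal G(\bs x(t), \beta(t))$, the chain rule gives
\begin{equation*}
\frac{d}{dt}\mathcal L(t) = \nabla \mathcal L(\bs x(t)) \cdot \bs x'(t) = \nabla \mathcal L(\bs x(t)) \cdot \mathcal G(\bs x(t), \beta(t)).
\end{equation*}
Applying \cref{lem:LG} with $\bs x = \bs x(t)$ and $\beta = \beta(t)$ yields
\begin{equation*}
\frac{d}{dt}\mathcal L(t) \leqslant -\mathcal L(t) - \beta(t)\nm{\nabla f(x(t))}^2 - \frac{\mu}{2}\nm{x(t)-v(t)}^2.
\end{equation*}
Dropping the nonnegative last term (it is available as a bonus but not needed here), we obtain the scalar differential inequality
\begin{equation*}
\frac{d}{dt}\mathcal L(t) + \mathcal L(t) \leqslant -\beta(t)\nm{\nabla f(x(t))}^2.
\end{equation*}

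Multiplying through by the integrating factor $e^{t}$ converts the left-hand side into a perfect derivative:
\begin{equation*}
\frac{d}{dt}\bigl(e^{t}\mathcal L(t)\bigr) \leqslant -e^{t}\beta(t)\nm{\nabla f(x(t))}^2.
\end{equation*}
Integrating from $0$ to $t$ and multiplying back by $e^{-t}$ produces exactly \cref{eq:conv-Lt}.

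The entire argument is short once \cref{lem:LG} is in hand; there is no real obstacle beyond verifying that the chain rule applies, which follows from the regularity of the solution $\bs x$ noted just after \cref{eq:shortHNAG}. The only conceptual point worth emphasizing is that the strong Lyapunov property packages simultaneously (i) the exponential decay of $\mathcal L(t)$ with rate $e^{-t}$ (from the $\mathcal L(\bs x)$ term on the right of \cref{eq:LG}) and (ii) the additional integrated bound on $\beta(s)\nm{\nabla f(x(s))}^2$ (from the $\beta\nm{\nabla f(x)}^2$ term), which together give both conclusions of the theorem in a single step.
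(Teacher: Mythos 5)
Your proposal is correct and follows essentially the same route as the paper: chain rule along the flow, the strong Lyapunov inequality \cref{eq:LG} to get $\mathcal L'(t) + \mathcal L(t) + \beta(t)\nm{\nabla f(x(t))}^2 \leqslant 0$ (discarding the $\tfrac{\mu}{2}\nm{x-v}^2$ term), then the integrating factor $e^{t}$ and integration over $(0,t)$. No gaps.
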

\begin{proof}
	By the chain rule $\mathcal L'(t) = \nabla \mathcal L(\bs x (t))\cdot \mathcal G(\bs x(t),\beta(t))$ and the key estimate \cref{eq:LG}, we have the inequality
	\[
	\mathcal L'(t)\leqslant-\mathcal L(t) 
	-\beta(t)\nm{\nabla f(x(t))}^2-\frac{\mu}{2}\nm{x(t)-v(t)}^2
	\leqslant-\mathcal L(t).
	\]
	This yields the exponential decay rate $\mathcal L(t)\leqslant e^{-t}\mathcal L(0)$. Moreover, we find that 
	\[
	\mathcal L'(t) + \mathcal L(t) + 
	\beta(t)\nm{\nabla f(x(t))}^2
	\leqslant0.
	\]
	Multiplying both sides by $e^{t}$
	and integrating over $(0,t)$ gives 
	\begin{equation*}
		\int_{0}^{t}\dd \left(e^s\mathcal L(s)\right)+
		\int_{0}^{t}e^{s}\beta(s)
		\nm{\nabla f(x(s))}^2
		\!\dd s
		\leqslant 0,
	\end{equation*}
	which also implies 
	\begin{equation*}
		e^t\mathcal L(t)+
		\int_{0}^{t}e^{s}\beta(s)
		\nm{\nabla f(x(s))}^2
		\!\dd s
		\leqslant 
		\mathcal L(0)
		,\quad t\geqslant0.
	\end{equation*}
	This proves \cref{eq:conv-Lt} and establishes the proof of this theorem.
\end{proof}
\begin{rem}
We do not have to give the explicit form of $\beta(t)$, which is acceptable as long as it is positive, i.e., $\beta(t)>0$ for all $t>0$. In the discretization level, however, to obtain optimal rate of convergence, we shall choose special coefficient $\beta_k$, which is positive and computable (cf. \cref{thm:conv-semi} and \cref{thm:conv-ex1-ode-NAG}).
\end{rem}

\begin{rem}
As discussed in \cite[section 2.2]{luo_chen_2019_from}, the exponential decay \cref{eq:conv-Lt} may be sped or slowed down if we introduce the time rescaling. In our model \cref{eq:HNAG}, such rescaling is automatically encoded in the damping 
parameter $\gamma$ governed by the equation 
$\gamma'=\mu-\gamma$ which allow us to handle $\mu> 0$ and $\mu =0$ in a unified way. 
\end{rem}
\section{A Semi-implicit Scheme}
\label{sec:semi-im}
In this section, we consider a semi-implicit scheme for our H-NAG flow \cref{eq:HNAG}, where $f\in\mathcal S_\mu^1$ with $\mu\geqslant 0$. 
We will see that in the discrete level, rescaling effect and exponential decay can be inherit by (semi-)implicit scheme which has no restriction on step size; see \cref{thm:conv-semi}, \cite[Theorem 3.1]{attouch_fast_2018} and \cite[Theorem 1]{luo_chen_2019_from}. 

Our scheme reads as follows
\begin{equation}\label{eq:semi-HNAG}
	\left\{
	\begin{aligned}
		\frac{x_{k+1}-x_{k}}{\alpha_k}={}& v_{k}-x_{k+1}-\beta_k\nabla f(x_{k+1}),\\
		\frac{v_{k+1}-v_{k}}{\alpha_k}={}&
		\frac{\mu }{\gamma_k}(x_{k+1}-v_{k+1})
		-\frac{1}{\gamma_k}\nabla f(x_{k+1}),\\
		\frac{\gamma_{k+1} - \gamma_{k} }{\alpha_k}  ={}& 
		\mu -\gamma_{k+1}.
	\end{aligned}
	\right.
\end{equation}
If we set 
\[
y_k:=\frac{x_k+\alpha_kv_k}{1+\alpha_k} ,\quad 
s_k:=\frac{\alpha_k\beta_k}{1+\alpha_k},
\]
then the update for $x_{k+1}$ is equivalent to 
\[
x_{k+1} = y_k-s_k\nabla f(x_{k+1})=\proxi_{s_k f}(y_k).
\]
After obtaining $x_{k+1}$, $v_{k+1}$ is obtained through the second equation of \cref{eq:semi-HNAG}.

To characterize the convergence rate, denote by
\begin{equation}\label{eq:lambdak}
	\lambda_0=1,\quad\lambda_k = \prod_{i=0}^{k-1}\frac{1}{1+\alpha_i},\quad k\geqslant 1.
\end{equation}
We introduce the discrete Lyapunov function
\begin{equation}
	\label{eq:Lk}
	\mathcal L_k
	:= \mathcal L(\bs x_k)
	= {}f(x_k)-f(x^*) +
	\frac{\gamma_k}{2}
	\nm{v_k-x^*}^2,
\end{equation}
where $\bs x_k=(x_k, v_k, \gamma_k)$, and 
\begin{equation}\label{eq:Rk-semi}
	\mathcal R_0=0,\quad
	\mathcal R_k: =\frac{\lambda_k}{2}\sum_{i=0}^{k-1}
	\frac{\alpha_{i}\beta_i}{\lambda_i}\nm{\nabla f(x_{i+1})}^2,\quad k\geqslant 1.
\end{equation}
Furthermore, for all $k\geqslant 0$, we set
\begin{equation}\label{eq:Ek-semi}
	\mathcal E_k=\mathcal L_k+\mathcal R_k.
\end{equation}
In the following, we present the convergence result 
for our semi-implicit scheme~\cref{eq:semi-HNAG}. 
\begin{thm}\label{thm:conv-semi}
	Assume $\beta_k$ satisfies $ \beta_k \gamma_k=\alpha_k $, then for the semi-implicit scheme~\cref{eq:semi-HNAG} with any step size $\alpha_k>0$, we have
	\begin{equation}\label{eq:diff-Ek-semi}
		\mathcal E_{k+1}\leqslant 
		\frac{	\mathcal E_k}{1+\alpha_k }\quad\forall\,k\geqslant 0.
	\end{equation}
	Consequently, for all $k\geqslant 0$, it holds that
	\begin{equation}\label{eq:conv-im-g}
		\mathcal L_k+\frac{\lambda_k}{2}\sum_{i=0}^{k-1}
		\frac{\alpha_{i}^2}{\lambda_i\gamma_i}
		\nm{\nabla f(x_{i+1})}^2
		\leqslant\lambda_k\mathcal L_0.
	\end{equation}
\end{thm}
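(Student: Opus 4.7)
The plan is to establish the per-step recursion \cref{eq:diff-Ek-semi} via a discrete version of the strong Lyapunov inequality \cref{eq:LG} evaluated at the new iterate $\bs x_{k+1}$, and then to telescope. More precisely, I would first prove the discrete strong Lyapunov inequality
\[
(1+\alpha_k)\mathcal L_{k+1} + \tfrac{\alpha_k\beta_k}{2}\nm{\nabla f(x_{k+1})}^2 \leqslant \mathcal L_k,
\]
which, combined with the identity $(1+\alpha_k)\mathcal R_{k+1} = \mathcal R_k + \tfrac{\alpha_k\beta_k}{2}\nm{\nabla f(x_{k+1})}^2$ (a direct consequence of \cref{eq:Rk-semi} and $(1+\alpha_k)\lambda_{k+1} = \lambda_k$), immediately yields \cref{eq:diff-Ek-semi}. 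The second statement \cref{eq:conv-im-g} then follows by iterating \cref{eq:diff-Ek-semi}, using $\mathcal E_0 = \mathcal L_0$ (since $\mathcal R_0 = 0$), and substituting $\beta_i = \alpha_i/\gamma_i$ from the hypothesis.

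The bulk of the work is the per-step inequality. I would split
\[
\mathcal L_{k+1} - \mathcal L_k = [f(x_{k+1}) - f(x_k)] + \tfrac{1}{2}\left[\gamma_{k+1}\nm{v_{k+1} - x^*}^2 - \gamma_k\nm{v_k - x^*}^2\right]
\]
and treat each piece in parallel with the proof of \cref{lem:LG}. For the quadratic part, rewrite it as $\tfrac{\gamma_k}{2}\bigl[\nm{v_{k+1} - x^*}^2 - \nm{v_k - x^*}^2\bigr] + \tfrac{\gamma_{k+1}-\gamma_k}{2}\nm{v_{k+1} - x^*}^2$, apply the exact identity $\nm{v_{k+1} - x^*}^2 - \nm{v_k - x^*}^2 = 2\langle v_{k+1} - v_k, v_{k+1} - x^*\rangle - \nm{v_{k+1} - v_k}^2$ (the $-\nm{\cdot}^2$ term here is precisely what forces the factor $\tfrac12$ in the discrete inequality, absent in the continuous version), and then substitute the $v$- and $\gamma$-equations of \cref{eq:semi-HNAG}. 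For the $f$-part, apply $\mu$-convexity at the new point $x_{k+1}$ to both $f(x_{k+1}) - f(x_k)$ and $\alpha_k[f(x_{k+1}) - f(x^*)]$, and substitute the $x$-equation of \cref{eq:semi-HNAG}. After collecting terms, all $\mu$-cross contributions cancel, and the gradient-squared contributions combine to $-\alpha_k\beta_k + \tfrac{\alpha_k^2}{\gamma_k} - \tfrac{\alpha_k^2}{2\gamma_k} = -\tfrac{\alpha_k\beta_k}{2}$, where the first term comes from the $x$-equation, the second from a cross term $\langle \nabla f(x_{k+1}), v_k - v_{k+1}\rangle$, and the third from expanding $-\tfrac{\gamma_k}{2}\nm{v_{k+1}-v_k}^2$. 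The hypothesis $\beta_k\gamma_k = \alpha_k$ is exactly what makes this coefficient work out.

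The main obstacle I anticipate is the careful bookkeeping of the two $\mu$-cross terms of the form $\langle \nabla f(x_{k+1}), v_{k+1} - x_{k+1}\rangle$: one arises when evaluating $\alpha_k\langle \nabla f(x_{k+1}), v_k - v_{k+1}\rangle$ via the $v$-equation, and a second, oppositely signed one arises upon expanding $\nm{\mu(x_{k+1}-v_{k+1}) - \nabla f(x_{k+1})}^2$ inside $\tfrac{\gamma_k}{2}\nm{v_{k+1} - v_k}^2$. They must cancel exactly; likewise the pure $\mu$-quadratic cross term $\alpha_k\mu\langle x_{k+1}-v_{k+1}, v_{k+1}-x^*\rangle$, expanded via $2\langle a, b\rangle = \nm{a}^2 + \nm{b}^2 - \nm{a-b}^2$, must cancel against the $-\tfrac{\alpha_k\mu}{2}\nm{x^*-x_{k+1}}^2$ produced by $\mu$-convexity, leaving only a nonpositive term $-\tfrac{\alpha_k\mu}{2}\nm{x_{k+1}-v_{k+1}}^2$ to discard. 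Once these cancellations are in place the rest is automatic.
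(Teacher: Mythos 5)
Your proposal is correct and follows essentially the same route as the paper: the same splitting of $\mathcal L_{k+1}-\mathcal L_k$, the same per-step inequality $(1+\alpha_k)\mathcal L_{k+1}+\tfrac{\alpha_k\beta_k}{2}\nm{\nabla f(x_{k+1})}^2\leqslant \mathcal L_k$, the same recursion $(1+\alpha_k)\mathcal R_{k+1}=\mathcal R_k+\tfrac{\alpha_k\beta_k}{2}\nm{\nabla f(x_{k+1})}^2$, and the same telescoping using $\beta_k\gamma_k=\alpha_k$. The only cosmetic difference is that the paper packages your inline $\mu$-cancellations into the strong Lyapunov inequality \cref{eq:LG} and bounds the cross term $\alpha_k\dual{\nabla f(x_{k+1}),v_k-v_{k+1}}$ by Cauchy--Schwarz against $-\tfrac{\gamma_k}{2}\nm{v_{k+1}-v_k}^2$, rather than substituting the $v$-equation exactly; both give the same coefficient $\tfrac{\alpha_k^2}{2\gamma_k}$ and hence the same conclusion.
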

\begin{proof}
	We first split the difference as
	\begin{align*}
		\mathcal L_{k+1}- \mathcal L_k={}&
		\mathcal L(x_{k+1}, v_{k},\gamma_{k}) - \mathcal L(x_{k}, v_{k},\gamma_{k})\\
		{}&+\mathcal L(x_{k+1}, v_{k+1},\gamma_{k}) - \mathcal L(x_{k+1}, v_{k},\gamma_{k}) \\
		&+\mathcal L(x_{k+1}, v_{k+1},\gamma_{k+1}) - \mathcal L(x_{k+1}, v_{k+1},\gamma_{k}) \\
		:={}& {\rm I}_1 + {\rm I}_2 + {\rm I}_3.		
	\end{align*}
	The last item ${\rm I}_3$ is the easiest one as $\mathcal L$ is linear in $\gamma$
	\begin{equation}\label{eq:I3}
		{\rm I}_3 = \dual{\nabla_{\gamma}\mathcal L(\bs x_{k+1}), \gamma_{k+1} - \gamma_{k}} = \alpha_k (\nabla_{\gamma}\mathcal L(\bs x_{k+1}), \mathcal G^{\gamma}(\bs x_{k+1})).
	\end{equation}
	For item ${\rm I}_2$, we use the fact $\mathcal L(x_{k+1}, \cdot, \gamma_k)$ is $\gamma_k$-convex and the discretization \cref{eq:semi-HNAG} to get
	\begin{align}
		{\rm I}_2\leqslant {} &\dual{\nabla_v \mathcal L( x_{k+1}, v_{k+1}, \gamma_{k}), v_{k+1} - v_k} - \frac{\gamma_k}{2}\nm{ v_{k+1} - v_k}^2\nonumber\\
		={}&\alpha_k \dual{\nabla_v \mathcal L(\bs x_{k+1}), \mathcal G^v(\bs x_{k+1})}- \frac{\gamma_k}{2}\nm{ v_{k+1} - v_k}^2.\label{eq:I2}
	\end{align}
	In the last step, as $\gamma_k$ is canceled in the product, we can switch the argument $\gamma_k$ to $\gamma_{k+1}$. 
	By the convexity of $f$, it is clear that 
	\begin{equation*}
		\begin{split}
			{}&{\rm I}_{1} =f(x_{k+1})-f(x_k)\leqslant \dual{\nabla f(x_{k+1}), x_{k+1} - x_k}\\
			={}&\alpha_k\dual{\nabla_{x}\mathcal L(\bs x_{k+1}), \mathcal G^x(\bs x_{k+1},\beta_k)}+
			\alpha_k\dual{\nabla f(x_{k+1}), v_k-v_{k+1}}.
		\end{split}
	\end{equation*}
	Observing the negative term in \cref{eq:I2}, we bound the second term as follows
	$$
	\alpha_k \| \nabla f(x_{k+1})\|\| v_k - v_{k+1}\|
	\leqslant \frac{\alpha_k^2}{2\gamma_k} 
	\| \nabla f(x_{k+1})\|^2 + \frac{\gamma_k}{2}\| v_k - v_{k+1}\|^2.
	$$
	Now, adding all together and using the strong Lyapunov property  \cref{eq:LG}, we get 
	\begin{align}
		\mathcal L_{k+1}- \mathcal L_k
		\leqslant {}&\alpha_k \big(\nabla \mathcal L(\bs x_{k+1}), \mathcal G(\bs x_{k+1},\beta_k) \big) + \frac{\alpha_k^2}{2\gamma_k}  \| \nabla f(x_{k+1})\|^2\nonumber\\
		\leqslant {}&- \alpha_k \mathcal L_{k+1}+ \left (\frac{\alpha_k^2}{2\gamma_k} -\alpha_k\beta_k\right ) \|\nabla f(x_{k+1})\|^2\nonumber\\
		= {}&- \alpha_k \mathcal L_{k+1} -\frac{\alpha_k\beta_k}{2} \|\nabla f(x_{k+1})\|^2.		\label{eq:diff-Lk-im}
	\end{align}
	Finally, by definition $\lambda_{k+1}-\lambda_k=-\alpha_k\lambda_{k+1}$, it is evident that 
	\begin{align}
		2\mathcal R_{k+1}-2\mathcal R_k={}&
		\lambda_{k+1}\sum_{i=0}^{k}
		\frac{\alpha_{i}\beta_i}{\lambda_i}\nm{\nabla f(x_{i+1})}^2
		-\lambda_k\sum_{i=0}^{k-1}
		\frac{\alpha_{i}\beta_i}{\lambda_i}\nm{\nabla f(x_{i+1})}^2\nonumber\\
		={}&\alpha_k\beta_k
		\nm{\nabla f(x_{k+1})}^2+
		(\lambda_{k+1}-\lambda_{k})
		\sum_{i=0}^{k}
		\frac{\alpha_{i}\beta_i}{\lambda_i}\nm{\nabla f(x_{i+1})}^2\nonumber\\
		={}&\alpha_k\beta_k
		\nm{\nabla f(x_{k+1})}^2-\alpha_k2\mathcal R_{k+1}.\label{eq:diff-Rk-im}
	\end{align}
	Now combining the relation $ \beta_k \gamma_k=\alpha_k $ with \cref{eq:diff-Lk-im,eq:diff-Rk-im} implies \cref{eq:diff-Ek-semi} 
	and thus concludes the proof of this theorem.
\end{proof}

With carefully designed parameter $\beta_k=\alpha_k/\gamma_k$, the semi-implicit scheme~\cref{eq:semi-HNAG} 
can always achieve linear convergence rate as long as the step size $\alpha_k$ is chosen uniformly bounded below
$\alpha_k\geqslant\widehat{\alpha}>0$ for all $k>0$ and 
larger $\alpha_k$ yields faster convergence rate. Observing the update of $\gamma_{k+1}$, we conclude that, if $\gamma_0\geqslant \mu$, then $\gamma_k\geqslant \gamma_{k+1}\geqslant \mu$, and if $0< \gamma_0<\mu$, then $\gamma_k< \gamma_{k+1}<\mu$. Hence, it follows that
\begin{equation}\label{eq:low-gk}
	\min\{\gamma_0,\mu\}\leqslant \gamma_k\leqslant \max\{\gamma_0,\mu\},
\end{equation}
and 
from \cref{eq:conv-im-g} we can get fast convergence for the norm of the gradient.
\begin{rem}
	If $f$ is nonsmooth, we use the proximal operator $\proxi_{s_k f}$ to rewrite the implicit scheme \cref{eq:semi-HNAG} as follows
	\begin{equation*}
		\left\{
		\begin{aligned}
			x_{k+1} = {}&\proxi_{s_k f}(y_k), \quad y_k=\frac{x_k+\alpha_kv_k}{1+\alpha_k},\quad s_k=\frac{\alpha_k\beta_k}{1+\alpha_k}, \\
			p_{k+1}={}&\frac{1}{\beta_k}\left(v_{k}-x_{k+1} -\frac{x_{k+1}-x_{k}}{\alpha_k}\right),\\
			\frac{v_{k+1}-v_{k}}{\alpha_k}={}&
			\frac{\mu }{\gamma_k}(x_{k+1}-v_{k+1})
			-\frac{1}{\gamma_k}p_{k+1},\\
			\frac{\gamma_{k+1} - \gamma_{k} }{\alpha_k}  ={}& 
			\mu -\gamma_{k+1}.
		\end{aligned}
		\right.
	\end{equation*}
	Note that $p_{k+1}\in \partial f(x_{k+1})$. We just simply replace $\nabla f(x_{k+1})$ by $p_{k+1}$. 	In addition, thanks to \cref{lem:LG-non}, proceeding as the proof of 
	\cref{thm:conv-semi}, we can derive 
	\begin{equation*}
		\mathcal L_k+\frac{\lambda_k}{2}\sum_{i=0}^{k-1}
		\frac{\alpha_{i}\beta_i}{\lambda_i}\nm{p_{i+1}}^2
		\leqslant\lambda_k\mathcal L_0.
	\end{equation*}
\end{rem}
\section{Explicit Schemes with Optimal Rates}
\label{sec:explicit}
This section assumes $f\in\mathcal S_{\mu,L}^{1,1}$ 
with $\mu\geqslant 0$ and considers several explicit 
schemes including \cref{algo:HNAG}. All of those methods have optimal convergence rates in the sense of Nesterov \cite[Chapter 2]{Nesterov:2013Introductory}.
\subsection{Analysis of \cref{algo:HNAG}}
It is straightforward to verify that the \cref{algo:HNAG} is equivalent to the following explicit scheme 
\begin{equation}\label{eq:ex-HNAG}
	\left\{
	\begin{aligned}
		\frac{x_{k+1}-x_{k}}{\alpha_k}={}& v_{k}-x_{k+1}-\beta_k\nabla f(x_{k}),\\
		\frac{v_{k+1}-v_{k}}{\alpha_k}={}&
		\frac{\mu }{\gamma_k}(x_{k+1}-v_{k+1})
		-\frac{1}{\gamma_k}\nabla f(x_{k+1}),\\
		\frac{\gamma_{k+1} - \gamma_{k} }{\alpha_k}  ={}& 
		\mu -\gamma_{k+1},
	\end{aligned}
	\right.
\end{equation}
where 
\begin{equation}\label{eq:ab}
	\alpha_k=\sqrt{\frac{\gamma_k}{L}},\quad \beta_k = \frac{1}{L\alpha_k}.
\end{equation}
Given $(x_k, v_k, \gamma_k)$, we can solve the first equation to get $x_{k+1}$ and with known $x_{k+1}$, we can get $v_{k+1}$ from the second equation. 
Moreover, the sequence $\{v_k\}$ can be further eliminated to get an equation of $(x_{k+1}, x_k, x_{k-1})$
\begin{equation*}
	\begin{aligned}
		\displaystyle
		{}&\gamma_k\cdot\frac{\frac{x_{k+1}-x_k}{\alpha_k}
			-\frac{x_{k}-x_{k-1}}{\alpha_{k-1}}}{\alpha_k}
		+(\mu+\gamma_k)\cdot
		\frac{x_{k+1}-x_k}{\alpha_k}\\
		&\qquad+\gamma_k\beta_k\cdot
		\frac{\nabla f(x_{k})-\nabla f(x_{k-1})}{\alpha_k}
		+(1+\mu\beta_{k})\nabla f(x_{k})\\
		{}&\qquad\qquad+\gamma_k\cdot\frac{\beta_k-\beta_{k-1}}
		{\alpha_k}\cdot\nabla f(x_{k-1}) = 0,
	\end{aligned}
\end{equation*}
which is an explicit scheme for \cref{eq:ode-agf-H} since the unknown $x_{k+1}$ is not in the gradient. Note that Hessian term $\nabla^2 f$ is not present as the action $\nabla^2 f(x)x'$ can be discretized by the quotient of the gradient. 

For the convergence analysis, we need the following tighter bound on the function difference; see \cite[Theorem 2.1.5]{Nesterov:2013Introductory}.
\begin{lem}[\cite{Nesterov:2013Introductory}]
	\label{lem:key-est}
	If $ f\in\mathcal F_L^1$, then
	\begin{equation*}
		f(y)-f(x) \leqslant 
		\dual{\nabla f(y), y-x} 
		- \frac{1}{2L}\nm{\nabla f( y) - \nabla  f(x)}^2\quad\forall\, x,y\in V.
	\end{equation*}
\end{lem}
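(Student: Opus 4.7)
The plan is to adapt the standard argument from Nesterov's textbook by reducing the claim to a minimization of a well-chosen auxiliary convex function. First I would introduce
$$\phi(z) := f(z) - \dual{\nabla f(y), z}, \quad z \in V.$$
Since $f \in \mathcal F_L^1$, the shifted function $\phi$ is also convex with $L$-Lipschitz continuous gradient $\nabla \phi(z) = \nabla f(z) - \nabla f(y)$. Because $\nabla \phi(y) = 0$ and $\phi$ is convex, $y$ is a global minimizer of $\phi$ on $V$; this is the structural property I want to exploit.

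Next I would apply the descent lemma to $\phi$, a direct consequence of $L$-smoothness: for any $x, z \in V$,
$$\phi(z) \leqslant \phi(x) + \dual{\nabla \phi(x), z - x} + \frac{L}{2}\nm{z - x}^2.$$
Minimizing the right-hand side in $z$ gives the optimal choice $z^{\star} = x - \frac{1}{L}\nabla \phi(x)$, and substituting back yields
$$\phi(z^{\star}) \leqslant \phi(x) - \frac{1}{2L}\nm{\nabla \phi(x)}^2.$$
Since $y$ is a global minimizer of $\phi$, we have $\phi(y) \leqslant \phi(z^{\star})$, and chaining the two inequalities produces $\phi(y) - \phi(x) \leqslant -\frac{1}{2L}\nm{\nabla f(y) - \nabla f(x)}^2$.

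Finally I would unfold the definition of $\phi$ on both sides: the left-hand side becomes $f(y) - f(x) - \dual{\nabla f(y), y - x}$, and a routine rearrangement gives exactly the claimed estimate. The only substantive ingredient beyond the choice of $\phi$ is the descent lemma, which follows by integrating $\nm{\nabla f(z) - \nabla f(x)}_{*} \leqslant L\nm{z - x}$ along the segment from $x$ to $z$; this is completely standard, so I anticipate no real obstacle. The cleverness is concentrated entirely in the auxiliary function $\phi$, which simultaneously turns $y$ into a global minimizer and makes the gradient difference $\nabla f(y) - \nabla f(x)$ appear (up to sign) as $\nabla \phi(x)$, so that the descent-lemma bound takes exactly the form we need.
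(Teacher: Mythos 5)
Your argument is correct: the auxiliary function $\phi(z)=f(z)-\dual{\nabla f(y),z}$ is convex with $L$-Lipschitz gradient, $y$ is its global minimizer, and the descent-lemma step $\phi(y)\leqslant\phi\bigl(x-\tfrac{1}{L}\nabla\phi(x)\bigr)\leqslant\phi(x)-\tfrac{1}{2L}\nm{\nabla\phi(x)}^2$ unfolds to exactly the stated inequality. The paper does not prove this lemma itself but simply cites it as Theorem 2.1.5 of Nesterov's textbook, and your proof is precisely the standard argument given there, so there is nothing to reconcile.
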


For the explicit scheme, we modify the definition \cref{eq:Rk-semi} 
of $\mathcal R_k$ slightly as
\begin{equation*}
	\mathcal R_0=0,\quad
	\mathcal R_k: =\frac{\lambda_k}{2}\sum_{i=0}^{k-1}
	\frac{\alpha_i\beta_i}{\lambda_i}\nm{\nabla f(x_{i})}^2,\quad k\geqslant 1,
\end{equation*}
and we also set $\mathcal E_k:=\mathcal L_k+\mathcal R_k$, where $\lambda_k$ and $\mathcal L_k$ are defined in \cref{eq:lambdak,eq:Lk}, respectively. Similar to the derivation of \cref{eq:diff-Rk-im}, we have
\begin{equation}\label{eq:diff-Rk-ex}
	\mathcal R_{k+1}-\mathcal R_k ={}-\alpha_k\mathcal R_{k+1} +\frac{\alpha_k\beta_k}{2}
	\nm{\nabla f(x_{k})}^2.
\end{equation}

\begin{thm}
	\label{thm:conv-ex1-ode-NAG}
	For \cref{algo:HNAG}, we have
	\begin{equation}\label{eq:conv1-ex1-ode-NAG}
		\mathcal E_{k+1}
		\leqslant 
		\frac{	 \mathcal E_k}{1+\alpha_k }\quad\forall\,k\geqslant 0.
	\end{equation}
	Consequently, for all $k\geqslant0$, it holds that
	\begin{equation}\label{eq:conv-ex}
		\mathcal L_k+\frac{1}{2L}\sum_{i=0}^{k-1}
		\frac{\lambda_k}{\lambda_i} \nm{\nabla f(x_{i})}^2
		\leqslant\lambda_k\mathcal L_0.
	\end{equation}
	Above, $\lambda_k$ is bounded above by the optimal convergence rate
	\begin{equation}\label{eq:lambdak-ex}
		\lambda_k\leqslant \min\left\{
		8L\left(2\sqrt{2L}+\sqrt{\gamma_0}k\right)^{-2},\,
		\left(1+\sqrt{\min\{\gamma_0,\mu\}/L}\right)^{-k}
		\right\}.
	\end{equation}
\end{thm}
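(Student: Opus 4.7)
The plan is to adapt the proof of \cref{thm:conv-semi} to the explicit setting, the essential new ingredient being the tighter descent inequality of \cref{lem:key-est} that compensates for the gradient mismatch introduced by evaluating $\nabla f$ at $x_k$ in the first equation of \cref{eq:ex-HNAG}. Decompose $\mathcal L_{k+1} - \mathcal L_k = \mathrm{I}_1 + \mathrm{I}_2 + \mathrm{I}_3$ exactly as in the proof of \cref{thm:conv-semi}: the terms $\mathrm{I}_2, \mathrm{I}_3$ are handled identically via $\gamma_k$-convexity in $v$ and linearity in $\gamma$, producing $\alpha_k\dual{\nabla_v\mathcal L(\bs x_{k+1}),\mathcal G^v(\bs x_{k+1})} - \tfrac{\gamma_k}{2}\nm{v_{k+1}-v_k}^2$ and $\alpha_k\dual{\nabla_\gamma\mathcal L(\bs x_{k+1}),\mathcal G^\gamma(\bs x_{k+1})}$, respectively. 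For $\mathrm{I}_1 = f(x_{k+1}) - f(x_k)$, apply \cref{lem:key-est} to obtain $\mathrm{I}_1 \leqslant \dual{\nabla f(x_{k+1}), x_{k+1}-x_k} - \tfrac{1}{2L}\nm{\nabla f(x_{k+1})-\nabla f(x_k)}^2$.

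Next, use the first equation of \cref{eq:ex-HNAG} to decompose
\[
x_{k+1}-x_k = \alpha_k \mathcal G^x(\bs x_{k+1},\beta_k) + \alpha_k(v_k - v_{k+1}) + \alpha_k\beta_k\big(\nabla f(x_{k+1})-\nabla f(x_k)\big),
\]
which splits $\dual{\nabla f(x_{k+1}), x_{k+1}-x_k}$ into three pieces. The first piece combines with $\mathrm{I}_2, \mathrm{I}_3$ to form $\alpha_k\nabla\mathcal L(\bs x_{k+1})\cdot\mathcal G(\bs x_{k+1},\beta_k)$, which is bounded via the strong Lyapunov property \cref{eq:LG} by $-\alpha_k\mathcal L_{k+1}-\alpha_k\beta_k\nm{\nabla f(x_{k+1})}^2$. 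The second piece, bounded by Young's inequality, cancels the negative term $-\tfrac{\gamma_k}{2}\nm{v_{k+1}-v_k}^2$ from $\mathrm{I}_2$ and leaves a residual $\tfrac{\alpha_k^2}{2\gamma_k}\nm{\nabla f(x_{k+1})}^2$. The third expands via the polarization identity into $\tfrac{\alpha_k\beta_k}{2}\big(\nm{\nabla f(x_{k+1})}^2 + \nm{\nabla f(x_{k+1}) - \nabla f(x_k)}^2 - \nm{\nabla f(x_k)}^2\big)$. Thanks to the choices \cref{eq:ab}, one has $\alpha_k^2/\gamma_k = 1/L = \alpha_k\beta_k$, so the $\nm{\nabla f(x_{k+1})}^2$ residuals cancel exactly and the $\nm{\nabla f(x_{k+1})-\nabla f(x_k)}^2$ residual is absorbed by the \cref{lem:key-est} term, leaving
\[
\mathcal L_{k+1}-\mathcal L_k \leqslant -\alpha_k\mathcal L_{k+1} - \tfrac{\alpha_k\beta_k}{2}\nm{\nabla f(x_k)}^2.
\]
Adding \cref{eq:diff-Rk-ex} produces the recursion \cref{eq:conv1-ex1-ode-NAG}, which telescopes to $\mathcal E_k \leqslant \lambda_k \mathcal L_0$; since $\alpha_i\beta_i = 1/L$, this is precisely \cref{eq:conv-ex}.

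For the bound \cref{eq:lambdak-ex} on $\lambda_k$, the exponential factor is immediate from \cref{eq:low-gk}: $\alpha_k = \sqrt{\gamma_k/L} \geqslant \sqrt{\min\{\gamma_0,\mu\}/L}$, so $\lambda_k \leqslant (1+\sqrt{\min\{\gamma_0,\mu\}/L})^{-k}$. For the polynomial factor, observe that $\gamma_{k+1}(1+\alpha_k) = \gamma_k + \mu\alpha_k \geqslant \gamma_k$ forces $\gamma_k/\lambda_k$ to be nondecreasing, giving $\gamma_k\geqslant\gamma_0\lambda_k$ and thus $\alpha_k \geqslant \sqrt{\gamma_0\lambda_k/L}$. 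Setting $\tau_k := 1/\sqrt{\lambda_k}$, the identity $\tau_{k+1}^2 = (1+\alpha_k)\tau_k^2$ yields $\tau_{k+1}^2 - \tau_k^2 \geqslant \sqrt{\gamma_0/L}\,\tau_k$. The main obstacle is converting this into the sharp linear lower bound $\tau_k \geqslant (2\sqrt{2L}+\sqrt{\gamma_0}\,k)/\sqrt{8L}$; a careful induction exploiting $\tau_k\geqslant 1$ and the completion-of-squares trick $\tau_k^2 + \sqrt{\gamma_0/L}\,\tau_k = (\tau_k+\tfrac{1}{2}\sqrt{\gamma_0/L})^2 - \tfrac{\gamma_0}{4L}$ produces this bound with the stated constants, after which squaring and rearranging gives the advertised upper bound on $\lambda_k$.
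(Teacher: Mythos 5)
Your proposal is correct and follows essentially the same route as the paper: the same three-term splitting of $\mathcal L_{k+1}-\mathcal L_k$, the same use of \cref{lem:key-est}, Young's inequality and the polarization identity, the exact cancellations from $\alpha_k\beta_k=\alpha_k^2/\gamma_k=1/L$, and the same telescoping with \cref{eq:diff-Rk-ex}. Your treatment of the sublinear bound on $\lambda_k$ (working with $\tau_k=1/\sqrt{\lambda_k}$, the recursion $\tau_{k+1}^2-\tau_k^2\geqslant\sqrt{\gamma_0/L}\,\tau_k$ and a completion-of-squares induction) is only a cosmetic variant of the paper's direct estimate of $1/\sqrt{\lambda_{k+1}}-1/\sqrt{\lambda_k}\geqslant\tfrac12\sqrt{\gamma_0/(2L)}$, and it carries the same implicit requirement on the size of $\gamma_0/L$ that the paper's own step does.
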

\begin{proof}
	Following the proof of \cref{thm:conv-semi}, we first split the difference $		\mathcal L_{k+1} - \mathcal L_k$ along the path $\bs x_k=(x_k, v_{k},\gamma_{k})$ to $(x_{k+1}, v_{k},\gamma_{k})$ to $(x_{k+1}, v_{k+1},\gamma_{k})$ and finally to  $\bs x_{k+1}=(x_{k+1}, v_{k+1},\gamma_{k+1})$:
	\begin{align*}
		\mathcal L_{k+1} - \mathcal L_k
		={}&  \mathcal L(x_{k+1}, v_{k},\gamma_{k}) - \mathcal L(x_k, v_{k},\gamma_{k}) \\
		&+ \mathcal L(x_{k+1}, v_{k+1},\gamma_{k}) - \mathcal L(x_{k+1}, v_{k},\gamma_{k}) \\
		&+\mathcal L(x_{k+1}, v_{k+1},\gamma_{k+1}) - \mathcal L(x_{k+1}, v_{k+1},\gamma_{k}) \\
		:={}& {\rm I}_1 + {\rm I}_2 + {\rm I}_3.
	\end{align*}
	Note that we still have \cref{eq:I3,eq:I2}:
	\begin{align}
		{\rm I}_3 ={}& \alpha_k (\nabla_{\gamma}\mathcal L(\bs x_{k+1}), \mathcal G^{\gamma}(\bs x_{k+1})),\nonumber\\
		{\rm I}_2 \leqslant{}&  \alpha_k \dual{\nabla_v \mathcal L(\bs x_{k+1}), \mathcal G^v(\bs x_{k+1})} - \frac{\gamma_k}{2}\nm{ v_{k+1} - v_k}^2.
			\label{eq:I2-ex}
	\end{align}
	We now use \cref{lem:key-est} to estimate ${\rm I}_1$
	\begin{equation*}
		\begin{split}
			{\rm I}_1
			\leqslant {}&\dual{\nabla_x \mathcal L(\bs x_{k+1}), x_{k+1} - x_k} - \frac{1}{2L}\nm{\nabla f(x_{k+1}) - \nabla f(x_k)}^2.
		\end{split}
	\end{equation*}
	In the first step, we can switch $(x_{k+1}, v_k,\gamma_k)$ to $\bs x_{k+1}$ because $\nabla_x \mathcal L$ is independent of $(v,\gamma)$. Then we use the discretization \cref{eq:ex-HNAG} to replace $x_{k+1} - x_k$ and compare with the flow evaluated at $\bs x_{k+1}$:
	\begin{align*}
		\dual{\nabla_x \mathcal L(\bs x_{k+1}), x_{k+1} - x_k} 
		={}&  \alpha_k \dual{\nabla_x \mathcal L(\bs x_{k+1}), \mathcal G^x(\bs x_{k+1},\beta_k)} \\
		& +\alpha_k\beta_k ( \nabla f(x_{k+1}) , \nabla f(x_{k+1}) - \nabla f(x_k))\\
		& +\alpha_k \dual{\nabla f(x_{k+1}), v_k - v_{k+1}}.
	\end{align*}
	Observing the bound \cref{eq:I2-ex} for ${\rm I}_2$, we  use Cauchy\textendash Schwarz inequality to bound the last term as follows
	\begin{equation}\label{eq:df-diff-vk}
		\begin{split}
			\alpha_k \| \nabla f(x_{k+1})\|\| v_k - v_{k+1}\|\leqslant {}&
			\frac{\alpha_k^2}{2\gamma_k} \| \nabla f(x_{k+1})\|^2 
			+ \frac{\gamma_k}{2}\| v_k - v_{k+1}\|^2.
		\end{split}
	\end{equation}
	We use the identity for the second term
	\begin{align*}
		&\alpha_k\beta_k( \nabla f(x_{k+1}) , \nabla f(x_{k+1}) - \nabla f(x_k)) \\
		= & - \frac{\alpha_k\beta_k}{2}\| \nabla f(x_{k})\|^2
		+ \frac{\alpha_k\beta_k}{2}\| \nabla f(x_{k+1})\|^2 + \frac{\alpha_k\beta_k}{2}\| \nabla f(x_{k+1}) - \nabla f(x_k) \|^2.
	\end{align*}
	Adding all together and applying \cref{lem:LG} yield that 
	\begin{equation}\label{diff-Lk}
		\begin{split}
			\mathcal L_{k+1} - \mathcal L_k
			\leqslant {}& -\alpha_k\mathcal L_{k+1}  -\frac{\alpha_k\beta_k}{2}\nm{\nabla f(x_k)}^2  \\
			& +\frac{1}{2}\left ( \alpha_k\beta_k - \frac{1}{L}\right )\| \nabla f(x_{k+1}) - \nabla f(x_k) \|^2\\
			& +\frac{1}{2}\left ( \frac{\alpha_k^2}{\gamma_k}  - \alpha_k\beta_k \right )\nm{\nabla f(x_{k+1})}^2.
		\end{split}
	\end{equation}
	Additionally, in view of the choice of parameters $\alpha_k$ and $ \beta_k$ (cf. \cref{eq:ab}), we have
	\[
	\alpha_k\beta_k-\frac{1}{L}=0,\quad
	\frac{\alpha_k^2}{\gamma_k}  
	-\alpha_k\beta_k = 0,
	\]
	which implies 
	\begin{align*}
		\mathcal L_{k+1} - \mathcal L_k
		\leqslant {}&- \alpha_k \mathcal L_{k+1}
		-\frac{\alpha_k\beta_k}{2}\nm{\nabla f(x_k)}^2.
	\end{align*}
	This  together \cref{eq:ab,eq:diff-Rk-ex} gives the desired estimates \cref{eq:conv1-ex1-ode-NAG,eq:conv-ex}.
	
	Next, let us study the asymptotic behavior of $\lambda_k$. 
	The formula of $\gamma_k$ yields
	\[
	\frac{1}{1+\alpha_k} = \frac{\gamma_{k+1}}{\gamma_k+\mu \alpha_k}
	\leqslant \frac{\gamma_{k+1}}{\gamma_k},
	\]
	and it follows from \cref{eq:lambdak} that
	\begin{equation}\label{eq:lambda_alpha}
		\lambda_k \leqslant 
		\frac{\gamma_k}{\gamma_0}
		=
		\frac{L\alpha_k^2}{\gamma_0}.
	\end{equation}
	Using the lower bound of $\alpha_k$ implied by \cref{eq:lambda_alpha}, we get
	\[
	\frac{1}{\sqrt{\lambda_{k+1}}} - \frac{1}{\sqrt{\lambda_k}}
	\geqslant
	\frac{\lambda_k-\lambda_{k+1}}{2\lambda_k\sqrt{\lambda_{k+1}}}
	=\frac{\alpha_k}{2\sqrt{\lambda_k(1+\alpha_k)}}
	\geqslant \frac{1}{2}\sqrt{\frac{\gamma_0}{2L}},
	\]
	which implies 
	\[
	\frac{1}{\sqrt{\lambda_k}}\geqslant 
	\frac{k}{2}\sqrt{\frac{\gamma_0}{2L}}+1.
	\]
	Therefore, we have
	\begin{equation}\label{eq:bd-mu0}
		\lambda_k\leqslant 8L\left(2\sqrt{2L}+\sqrt{\gamma_0}k\right)^{-2}.
	\end{equation}		
	Note that this sublinear rate holds for $\mu\geqslant 0$. If $\mu>0$, 
	then by \cref{eq:low-gk} it is evident that 
\begin{equation}\label{eq:ak-mu}
	\alpha_k^2=\frac{\gamma_k}{L}\geqslant \frac{1}{L}\min\{\gamma_0,\mu\},
\end{equation}
	so we have that
	\[
	\lambda_k\leqslant 
	\left(1+\sqrt{\min\{\gamma_0,\mu\}/L}\right)^{-k}.
	\]
	This together with \cref{eq:bd-mu0} implies \cref{eq:lambdak-ex} and concludes the proof.
\end{proof}
\begin{rem}
	As we see, unlike the semi-implicit scheme \cref{eq:semi-HNAG}, explicit scheme \cref{eq:ex-HNAG} has restriction on step size $\alpha_k$.
	When $\mu>0$, namely $f$ is strongly convex, it is allowed to choose non-vanishing step size (cf. \cref{eq:ak-mu}) which promises (accelerated) linear rate. For convex $f$, i.e., $\mu=0$, \cref{eq:lambda_alpha} becomes equality which gives vanishing step size $\alpha_k = O(1/k)$ and results in accelerated sublinear rate $O(1/k^2)$.
	\end{rem}
\begin{rem}\label{rem:norm-gd}
	Note that \cref{eq:conv-ex} gives the optimal convergence rate under an oracle model of optimization complexity \cite{Nesterov:2013Introductory}. However, the explicit schemes proposed in \cite{attouch_first-order_2019,shi_acceleration_nodate} for strongly convex case ($\mu>0$) haven't achieved acceleration. In addition, we also have faster rate for the norm of gradient. Indeed, by \cref{eq:conv-ex}, we have
	\[
	\sum_{i=0}^{\infty}
	\frac{1}{\lambda_i} \nm{\nabla f(x_{i})}^2\leqslant 2L\mathcal L_0.
	\]
	This yields that 
	\[
	\min_{0\leqslant i\leqslant k}
	\nm{\nabla f(x_i)}^2\leqslant \frac{2L\mathcal L_0}{\sum_{i=0}^{k}
		1/\lambda_i },
	\]
	and asymptotically, we have $	\nm{\nabla f(x_k)}^2 = o(2L\mathcal L_0\lambda_k)$.
	On the other hand, thanks to the \cref{lem:key-est}, we have the bound
	\[
	\frac{1}{2L}	\nm{\nabla f(x_k)}^2\leqslant f(x_k)-f(x^*)\leqslant \mathcal L_k,
	\]
	which yields the uniform estimate
	\begin{equation}\label{eq:decay-Df-slow}
		\nm{\nabla f(x_k)}^2 \leqslant  2L\mathcal L_0\lambda_k.
	\end{equation}
\end{rem}
\subsection{HNAG method with one extra gradient step}
\label{sec:HNAG-gd}
Based on \cref{eq:ex-HNAG}, we propose an explicit 
scheme with one extra gradient step:
\begin{equation}\label{eq:ex-HNAG-extra}
	\left\{
	\begin{aligned}
		\frac{y_{k}-x_{k}}{\alpha_k}={}& v_{k}-y_{k}-\beta_k\nabla f(x_{k}),\\
		\frac{v_{k+1}-v_{k}}{\alpha_k}={}&
		\frac{\mu }{\gamma_k}(y_{k}-v_{k+1})
		-\frac{1}{\gamma_k}\nabla f(y_{k}),\\
		\frac{\gamma_{k+1} - \gamma_{k} }{\alpha_k}  ={}& 
		\mu -\gamma_{k+1},\\
		x_{k+1} = {}&y_k-\frac{1}{L}\nabla f(y_k),
	\end{aligned}
	\right.
\end{equation}
where $\alpha_k$ and $\beta_k$ are chosen from the relation 
\begin{equation}\label{eq:ab-extra}
	L\alpha_k^2 = \gamma_k(2+\alpha_k),
	\quad \beta_k = \frac{1}{L\alpha_k}.
\end{equation}
Below, we present this scheme in the algorithm style.
\begin{algorithm}[H]
	\caption{HNAG Method with extra gradient step }
	\label{algo:HNAG-extra}
	\begin{algorithmic}[1] 
		\REQUIRE  $\gamma_0>0$ and $x_0,v_0\in V$.
		\FOR{$k=0,1,\ldots$}
		\STATE Compute $\alpha_k, \beta_k$ by $\displaystyle 
		L\alpha_k^2 = \gamma_k(2+\alpha_k), \quad \beta_k = \frac{1}{L\alpha_k}$.
		\smallskip
		\STATE Set $\displaystyle y_{k} = \frac{1}{1+\alpha_k} \big [ x_k + \alpha_k v_k - \alpha_k\beta_k \nabla f(x_k) \big ]$.
		\smallskip			
		\STATE Update $\displaystyle v_{k+1} = 
		\frac{1}{\gamma_k+\mu\alpha_k}
		\big [\gamma_kv_k+\mu \alpha_ky_{k}- \alpha_k \nabla f(y_{k}) \big]$.
		\smallskip
		\STATE Update $\displaystyle x_{k+1} = {}y_k-\frac{1}{L}\nabla f(y_k)$.
		\smallskip
		\STATE Update $\gamma_{k+1} = (\gamma_k+\mu\alpha_k)/(1+\alpha_k)$.			
		\ENDFOR
	\end{algorithmic}
\end{algorithm}

Define 
\begin{equation*}
	\widehat{\mathcal L}_k:=
	f(y_k)-f(x^*) +
	\frac{\gamma_{k+1}}{2}
	\nm{v_{k+1}-x^*}^2.
\end{equation*}
Proceeding as the proof of \cref{thm:conv-ex1-ode-NAG}, 
we still have \cref{diff-Lk}, i.e.,
\begin{equation*}
	\begin{split}
		\widehat{\mathcal L}_{k} - \mathcal L_k
		\leqslant {}& -\alpha_k\widehat{\mathcal L}_{k} 
		-\frac{\alpha_k\beta_k}{2}\nm{\nabla f(x_k)}^2  \\
		& +\frac{1}{2}\left ( \alpha_k\beta_k - \frac{1}{L}\right )\| \nabla f(y_{k}) - \nabla f(x_k) \|^2\\
		& +\frac{1}{2}\left ( \frac{\alpha_k^2}{\gamma_k}  - \alpha_k\beta_k \right )\nm{\nabla f(y_{k})}^2.
	\end{split}
\end{equation*}
We then use our choice of parameters \cref{eq:ab-extra} to obtain
\begin{equation}\label{eq:hat-Lk-Lk}
	\widehat{\mathcal L}_k-\mathcal L_k\leqslant 
	-\alpha_k\widehat{\mathcal L}_k+
	\frac{1+\alpha_k}{2L}  \| \nabla f(y_{k})\|^2
	-\frac{1}{2L}  \| \nabla f(x_{k})\|^2,
\end{equation}
Recalling the standard gradient descent result (cf. \cite[Lemma 1.2.3]
{Nesterov:2013Introductory})
\[
f(y-\nabla f(y)/L)-f(y) \leqslant 
-\frac{1}{2L}\nm{\nabla f(y) }^2\quad\forall\,y\in V,
\]
we get the inequality
\[
\mathcal L_{k+1}-\widehat{\mathcal L}_k
=f(x_{k+1})-f(y_k)
=f(y_k-\nabla f(y_k)/L)-f(y_k)
\leqslant 	- \frac{1}{2L}\nm{\nabla f(y_k) }^2.
\]
By \cref{eq:hat-Lk-Lk}, it follows that
\begin{equation}\label{eq:diff-Lk-extra}
	\mathcal L_{k+1}-\mathcal L_k
	\leqslant -\alpha_k\mathcal L_{k+1}
	-\frac{1}{2L}  \| \nabla f(x_{k})\|^2.
\end{equation}
Hence, using the same notation as that in 
\cref{thm:conv-ex1-ode-NAG}, we have the 
following result.
\begin{thm}
	\label{thm:conv-ex1-ode-NAG-extra}
	For \cref{algo:HNAG-extra}, we have
	\begin{equation}\label{eq:diff-Ek-HNAG-extra}
		\mathcal E_{k+1}
		\leqslant 
		\frac{	 \mathcal E_k}{1+\alpha_k }\quad\forall\,k\geqslant 0.
	\end{equation}
	Hence, for all $k\geqslant0$, it holds that
	\begin{equation}\label{eq:conv-ex-extra}
		\mathcal L_k+\frac{1}{2L}\sum_{i=0}^{k-1}
		\frac{\lambda_k}{\lambda_i} \nm{\nabla f(x_{i})}^2
		\leqslant\lambda_k\mathcal L_0,
	\end{equation}
	where $\lambda_k$ is defined by \cref{eq:lambdak} and still has the optimal upper bound
	\begin{equation}\label{eq:conv-HNAG-extra}
		\lambda_k\leqslant \min\left\{
		4L\left(2\sqrt{L}+\sqrt{1.5\gamma_0}\,k\right)^{-2},\,
		\left(1+\sqrt{2\min\{\gamma_0,\mu\}/L}\right)^{-k}
		\right\}.
	\end{equation}
\end{thm}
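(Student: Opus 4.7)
The plan is to piggyback on the one-step Lyapunov bound \eqref{eq:diff-Lk-extra} already derived immediately above the theorem, namely
\[
	\mathcal L_{k+1} - \mathcal L_k \leqslant -\alpha_k\mathcal L_{k+1} - \frac{1}{2L}\nm{\nabla f(x_k)}^2.
\]
This inequality absorbs all the non-trivial Lyapunov bookkeeping (the telescoping split of $\mathcal L_{k+1}-\mathcal L_k$ along the three coordinates of $\bs x_k$, the use of \cref{lem:LG} and \cref{lem:key-est}, and the standard gradient-descent lemma applied to the extra step $x_{k+1}=y_k-\nabla f(y_k)/L$), so what remains is essentially the same bookkeeping as in \cref{thm:conv-ex1-ode-NAG}.

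First I would add to the above the recurrence \eqref{eq:diff-Rk-ex} for $\mathcal R_k$. The parameter choice \eqref{eq:ab-extra} still enforces $\alpha_k\beta_k = 1/L$, so the $\nm{\nabla f(x_k)}^2$ terms cancel exactly. This produces $\mathcal E_{k+1} - \mathcal E_k \leqslant -\alpha_k\mathcal E_{k+1}$ with $\mathcal E_k := \mathcal L_k + \mathcal R_k$, which rearranges to the contraction \eqref{eq:diff-Ek-HNAG-extra}. Iterating, using $\mathcal R_0 = 0$, and invoking the definition \eqref{eq:lambdak} of $\lambda_k$ yields $\mathcal E_k \leqslant \lambda_k\mathcal L_0$; unpacking $\mathcal E_k$ is exactly \eqref{eq:conv-ex-extra}.

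The last step is the asymptotic bound \eqref{eq:conv-HNAG-extra} on $\lambda_k$. For the linear part, \eqref{eq:ab-extra} gives $L\alpha_k^2 = \gamma_k(2+\alpha_k) \geqslant 2\gamma_k$, hence $\alpha_k \geqslant \sqrt{2\gamma_k/L}$; combined with the lower bound $\gamma_k \geqslant \min\{\gamma_0,\mu\}$ already recorded in \eqref{eq:low-gk}, this yields $\alpha_k \geqslant \sqrt{2\min\{\gamma_0,\mu\}/L}$ when $\mu > 0$, and then the bound $\lambda_k \leqslant (1+\sqrt{2\min\{\gamma_0,\mu\}/L})^{-k}$ follows immediately from \eqref{eq:lambdak}. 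For the sublinear part I would mimic the proof of \cref{thm:conv-ex1-ode-NAG}: $\lambda_{k+1}/\lambda_k \leqslant \gamma_{k+1}/\gamma_k$ implies $\lambda_k \leqslant \gamma_k/\gamma_0$, and using
\[
	\frac{1}{\sqrt{\lambda_{k+1}}} - \frac{1}{\sqrt{\lambda_k}} \geqslant \frac{\alpha_k}{2\sqrt{\lambda_k(1+\alpha_k)}}
\]
together with the substitution $\gamma_k = L\alpha_k^2/(2+\alpha_k)$ gives a uniform lower bound that telescopes to the desired $O(k^{-2})$ rate.

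The main obstacle I anticipate is pinning down the constant $\sqrt{1.5\gamma_0}$ rather than $\sqrt{\gamma_0}$ in the sublinear rate. Naive bookkeeping only delivers $\sqrt{\gamma_0/L}/2$ as the per-step lower bound on $1/\sqrt{\lambda_{k+1}}-1/\sqrt{\lambda_k}$; upgrading it to $\sqrt{1.5\gamma_0/L}/2$ requires $(2+\alpha_k)/(1+\alpha_k)\geqslant 3/2$, i.e.\ $\alpha_k\leqslant 1$. This will be ensured by combining the explicit formula \eqref{eq:ab-extra} with the monotonicity of $\gamma_k$ encoded in the discrete update $\gamma_{k+1}=(\gamma_k+\mu\alpha_k)/(1+\alpha_k)$, possibly under a mild size condition on $\gamma_0$. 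Everything else is direct calculation of the same type already rehearsed twice earlier in the paper.
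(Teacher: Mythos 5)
Your proposal is correct and follows essentially the same route as the paper, whose proof simply notes that \cref{eq:diff-Ek-HNAG-extra,eq:conv-ex-extra} follow from combining \cref{eq:diff-Lk-extra} with \cref{eq:diff-Rk-ex} (using $\alpha_k\beta_k=1/L$) and that the bound \cref{eq:conv-HNAG-extra} is obtained by the $\lambda_k$ procedure of \cref{thm:conv-ex1-ode-NAG}, which the paper omits. Your observation that the constant $\sqrt{1.5\gamma_0}$ requires $(2+\alpha_k)/(1+\alpha_k)\geqslant 3/2$, i.e.\ $\alpha_k\leqslant 1$ (equivalently $\gamma_k\leqslant L/3$, a mild condition on $\gamma_0$ and $\mu$), is a genuine subtlety that the paper passes over in silence, so spelling it out only strengthens the argument.
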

\begin{proof}
	Note that \cref{eq:diff-Ek-HNAG-extra,eq:conv-ex-extra} have been derived from \cref{eq:diff-Rk-ex,eq:diff-Lk-extra}. The estimate \cref{eq:conv-HNAG-extra} for $\lambda_k$ follows from the procedure in \cref{thm:conv-ex1-ode-NAG} so we omit it here.
\end{proof}
\begin{rem}
	Note that the optimal convergence rate \cref{eq:conv-HNAG-extra}
	is slightly better than \cref{eq:lambdak-ex} due to an extra gradient step in \cref{algo:HNAG-extra}. However, two gradient $\nabla f(x_k)$ and $\nabla f(y_k)$ should be computed in one iteration. In \cref{algo:HNAG}, although there are still two gradient $\nabla f(x_k)$ and $\nabla f(x_{k+1})$, the later one can be re-used in the next iteration and thus essentially only one gradient is computed in one iteration. In most applications, evaluation of gradient is the dominant cost and thus \cref{algo:HNAG} is still more efficient than \cref{algo:HNAG-extra}.
\end{rem}
\subsection{Equivalence to methods from NAG flow}
\label{sec:methodfromNAG}
In this section, we shall show some explicit schemes that are supplemented with one gradient descent steps 
for NAG flow \cref{eq:NAG-sys} can be viewed as explicit discretizations for H-NAG flow \cref{eq:HNAG}.

Recall that, in \cite{luo_chen_2019_from}, we present two explicit schemes for NAG flow \cref{eq:NAG-sys}. The first one reads as follows
\begin{equation}\label{eq:ex1-ode-NAG}
	\left\{
	\begin{aligned}
		\frac{y_k-x_{k}}{\alpha_k}={}& v_{k}-y_k,\\
		\frac{v_{k+1}-v_{k}}{\alpha_k}={}&
		\frac{\mu}{\gamma_k}(y_k-v_{k+1})
		-\frac{1}{\gamma_k}\nabla f(y_k),\\
		x_{k+1} = {}&y_k-\frac{1}{L}\nabla f(y_k),\\
		\gamma_{k+1}={}&\gamma_k+\alpha_k(\mu-\gamma_{k+1}).
	\end{aligned}
	\right.
\end{equation}
Let us represent $x_k$ from the first equation 
\[
x_{k} = y_k+\alpha_k(y_k-v_k),
\]
and put this into the third equation to obtain 
\[
y_{k+1}+\alpha_{k+1}(y_{k+1}-v_{k+1}) = y_k-\frac{1}{L}\nabla f(y_k).
\]
Now reorganizing \cref{eq:ex1-ode-NAG} yield that
\[
\left\{
\begin{aligned}
\frac{v_{k+1}-v_{k}}{\alpha_k}={}&
\frac{\mu}{\gamma_k}(y_k-v_{k+1})
-\frac{1}{\gamma_k}\nabla f(y_k),\\
\frac{y_{k+1}-y_{k}}{\alpha_{k+1}}={}&
v_{k+1}-y_{k+1}
-\beta_{k+1}\nabla f(y_k),
\end{aligned}
\right.
\]
where $\beta_{k+1}=1/(L\alpha_{k+1})$. This is nothing but an explicit scheme for \cref{eq:HNAG}. In addition, writing the previous iteration for $y_{k}$ before $v_{k+1}$ and replacing $y_k$ with $x_{k+1}$ yield
\[
\left\{
\begin{aligned}
\frac{x_{k+1}-x_{k}}{\alpha_{k}}={}&
v_{k}-x_{k+1}
-\beta_{k}\nabla f(x_{k}),\\
\frac{v_{k+1}-v_{k}}{\alpha_k}={}&
\frac{\mu}{\gamma_k}(x_{k+1}-v_{k+1})
-\frac{1}{\gamma_k}\nabla f(x_{k+1}),
\end{aligned}
\right.
\]
which is identical to the scheme \cref{eq:ex-HNAG} but with slightly different choice of parameters. If $L\alpha_k^2 = \gamma_k(1+\alpha_k)$, then by \cite[Theorem 2]{luo_chen_2019_from}, we have the optimal convergence rate
\[
\mathcal L_{k}\leqslant 
\mathcal L_{0}\times
\min\left\{
4L\big(2\sqrt{L}+\sqrt{\gamma_0}\, k\big)^{-2},
\left(1+\sqrt{\min\{\gamma_0,\mu\}/L}\right)^{-k}
\right\},
\]
where $\mathcal L_k$ is defined by \cref{eq:Lk}.

The second scheme is listed below
\begin{equation}\label{eq:ex2-ode-NAG}
	\left\{
	\begin{aligned}
		\frac{y_{k}-x_{k}}{\alpha_k}={}& 
		\frac{\gamma_k}{\gamma_{k+1}}(v_{k}-y_k),\\
		\frac{v_{k+1}-v_{k}}{\alpha_k}={}&
		\frac{\mu}{\gamma_{k+1}}(y_k-v_{k})
		-\frac{1}{\gamma_{k+1}}\nabla f(y_k),\\
		x_{k+1} = {}&y_k-\frac{1}{L}\nabla f(y_k),\\
		\gamma_{k+1}={}&\gamma_k+\alpha_k(\mu-\gamma_{k}),
	\end{aligned}
	\right.
\end{equation}
which recoveries Nesterov's optimal method \cite[Chapter 2]{Nesterov:2013Introductory} constructed by estimate sequence.
Proceeding as before, we can eliminate $\{x_k\}$ and rearrange \cref{eq:ex2-ode-NAG} by that
\[
\left\{
\begin{aligned}
\frac{v_{k+1}-v_{k}}{\alpha_k}={}&
\frac{\mu}{\gamma_{k+1}}(y_k-v_{k})
-\frac{1}{\gamma_{k+1}}\nabla f(y_k),\\
\frac{y_{k+1}-y_{k}}{\alpha_{k+1}}={}&
\frac{\gamma_{k+1}}{\gamma_{k+2}}(v_{k+1}-y_{k+1})
-\beta_{k+1}\nabla f(y_k)
,\\
\gamma_{k+1}={}&\gamma_k+\alpha_k(\mu-\gamma_{k}),
\end{aligned}
\right.
\]
where $\beta_{k+1}=1/(L\alpha_{k+1})$. This is also an explicit scheme for our H-NAG flow \cref{eq:HNAG}. If $L\alpha_k^2 = \gamma_{k+1}$, then by \cite[Theorem 3]{luo_chen_2019_from}, we have the optimal convergence rate
\[
\mathcal L_{k}\leqslant 
\mathcal L_{0}\times
\min\left\{
4L\big(2\sqrt{L}+\sqrt{\gamma_0}\, k\big)^{-2},
\left(1-\sqrt{\min\{\gamma_0,\mu\}/L}\right)^{k}
\right\},
\]
which indicates the decay of the norm of gradient, i.e.,
\begin{equation}\label{eq:conv-Df-2}
	\nm{\nabla f(x_k)}^2\leqslant 2L
	\mathcal L_{0}\times
	\min\left\{
	4L\big(2\sqrt{L}+\sqrt{\gamma_0}\, k\big)^{-2},
	\left(1-\sqrt{\min\{\gamma_0,\mu\}/L}\right)^{k}
	\right\}.
\end{equation} 

We conclude that H-NAG flow offers us a better explanation and understanding for Nesterov's optimal method \cite[Chapter 2]{Nesterov:2013Introductory} than NAG flow \cref{eq:NAG-sys} does and in view of \cref{rem:norm-gd,eq:conv-Df-2}, algorithms based on H-NAG yields faster decay for the norm of the gradient. 
\section{Splitting Schemes with Accelerated Rates}
\label{sec:split}
In this section, we consider the composite case $f = h+g$ and assume that $f \in\mathcal S_{\mu}^{0}$ with $\mu\geqslant 0$, $h\in\mathcal F_L^1$ is the smooth part and the nonsmooth part $g$ is convex and lower semicontinuous. Note that this assumption on $f$ is more general than that in \cite{luo_chen_2019_from,Nesterov_2012,Siegel:2019}.
To utilize the composite structure of $f$, we shall consider splitting schemes that are explicit in $h$ and implicit in $g$ and prove the accelerated convergence rates.
\subsection{Analysis of \cref{algo:HNAG-comp}}
It is easy to show Algorithm \ref{algo:HNAG-comp} can be written as a splitting scheme 
\begin{equation}\label{eq:ex-HODE}
\left\{
\begin{aligned}
\frac{x_{k+1}-x_{k}}{\alpha_k}\in{}& v_{k}-x_{k+1}-\beta_k\nabla h(x_{k})-\beta_k\partial g(x_{k+1}),\\
\frac{v_{k+1}-v_{k}}{\alpha_k}={}&
\frac{\mu }{\gamma_k}(x_{k+1}-v_{k+1})
-\frac{1}{\gamma_k}\left(\nabla h(x_{k+1})+p_{k+1}\right),\\
\frac{\gamma_{k+1} - \gamma_{k} }{\alpha_k}  ={}& 
\mu -\gamma_{k+1},
\end{aligned}
\right.
\end{equation}
where $\alpha_k$ and $\beta_k$ are chosen from \cref{eq:ab}, i.e.,
\begin{equation}\label{eq:split-ab}
\alpha_k = \sqrt{\frac{\gamma_k}{L}},\quad\beta_k = \frac{1}{L\alpha_k},
\end{equation}
and the term $p_{k+1}$ is defined as follows
\[
p_{k+1} := \frac{1}{\beta_k}\left(v_{k}-x_{k+1}-\beta_k\nabla h(x_{k})-\frac{x_{k+1}-x_{k}}{\alpha_k}\right)
\in\partial g(x_{k+1}).
\]
If we introduce
\[
y_k:=\frac{x_k+\alpha_kv_k}{1+\alpha_k} ,\quad s_k:=\frac{\alpha_k\beta_k}{1+\alpha_k},\quad
z_{k} :=y_k- s_k \nabla h(x_k),
\]
then the update of $x_{k+1}$ in \cref{eq:ex-HODE} is equivalent to 
\begin{equation*}
\begin{split}
x_{k+1} = {}&\mathop{\argmin}\limits_{y\in V}
\left(h(x_k)+\dual{\nabla h(x_k),y-x_k}+g(y)+\frac{1}{2s_k}\nm{y-y_k}^2\right)\\
={}&\proxi_{s_k g}(y_k- s_k \nabla h(x_k)).
\end{split}
\end{equation*}

\begin{thm}
	\label{thm:conv-split}
	For \cref{algo:HNAG-comp}, we have
	\begin{equation}\label{eq:conv-split}
	\mathcal L_{k+1}
	\leqslant 
	\frac{	 \mathcal L_k}{1+\alpha_k }\quad\forall\, k\geqslant 0,
	\end{equation}
	where $\mathcal L_k$ is defined in \cref{eq:Lk},
	and it holds that
	\begin{equation}\label{eq:conv-algo2}
	\mathcal L_k
	\leqslant\mathcal L_0\times \min\left\{
	8L\left(2\sqrt{2L}+\sqrt{\gamma_0}k\right)^{-2},\,
	\left(1+\sqrt{\min\{\gamma_0,\mu\}/L}\right)^{-k}
	\right\}.
	\end{equation}
\end{thm}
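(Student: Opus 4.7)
The plan is to mirror the proof of \cref{thm:conv-ex1-ode-NAG}, tracing the difference $\mathcal L_{k+1}-\mathcal L_k$ along the three-leg path $(x_k,v_k,\gamma_k)\to(x_{k+1},v_k,\gamma_k)\to(x_{k+1},v_{k+1},\gamma_k)\to(x_{k+1},v_{k+1},\gamma_{k+1})$, replacing the smooth gradient $\nabla f$ by $\nabla h+p$ with $p\in\partial g$ wherever $\mathcal G$ involves a gradient. The key tool is \cref{lem:LG-non}, which I will invoke at $\bs x_{k+1}$ with the selection $p_{k+1}\in\partial g(x_{k+1})$ provided by the $x_{k+1}$ update of \cref{eq:ex-HODE}, and in whose output the $v$- and $\gamma$-components of the flow $\mathcal G$ agree with the right-hand sides of the second and third lines of \cref{eq:ex-HODE}.

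For the $\gamma$-leg the contribution is exact since $\mathcal L$ is linear in $\gamma$, and for the $v$-leg I would exploit $\gamma_k$-convexity of $\mathcal L$ in $v$ to extract a negative quadratic $-\frac{\gamma_k}{2}\|v_{k+1}-v_k\|^2$ together with $\alpha_k\dual{\nabla_v\mathcal L(\bs x_{k+1}),\mathcal G^v(\bs x_{k+1})}$. The $x$-leg is the delicate part: I would split $f(x_{k+1})-f(x_k)=(h(x_{k+1})-h(x_k))+(g(x_{k+1})-g(x_k))$, bound the smooth part by \cref{lem:key-est} applied to $h$, and bound the nonsmooth part by the subgradient inequality $g(x_{k+1})-g(x_k)\leqslant\dual{p_{k+1},x_{k+1}-x_k}$. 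Substituting $x_{k+1}-x_k$ using the first line of \cref{eq:ex-HODE} and switching the smooth gradient from $\nabla h(x_k)$ to $\nabla h(x_{k+1})$ produces three residual terms: an inner product $\alpha_k\dual{\nabla h(x_{k+1})+p_{k+1},v_k-v_{k+1}}$ absorbed by Cauchy-Schwarz against the $v$-leg negative quadratic, a polarization remainder proportional to $\|\nabla h(x_{k+1})-\nabla h(x_k)\|^2$ with coefficient $\alpha_k\beta_k-1/L$, and a term with coefficient $\alpha_k^2/\gamma_k-\alpha_k\beta_k$ applied to $\|\nabla h(x_{k+1})+p_{k+1}\|^2$. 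The choice \cref{eq:split-ab} zeroes out both coefficients, so after applying \cref{lem:LG-non} the inequality collapses to $\mathcal L_{k+1}-\mathcal L_k\leqslant -\alpha_k\mathcal L_{k+1}$, which rearranges to \cref{eq:conv-split}.

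The estimate \cref{eq:conv-algo2} then follows by iterating \cref{eq:conv-split}, giving $\mathcal L_k\leqslant \lambda_k\mathcal L_0$ with $\lambda_k=\prod_{i=0}^{k-1}(1+\alpha_i)^{-1}$, and invoking the asymptotic analysis already performed at the end of \cref{thm:conv-ex1-ode-NAG}. Because $\alpha_k=\sqrt{\gamma_k/L}$ and the recursion for $\gamma_k$ are exactly the same as in the smooth scheme, the bounds $\lambda_k\leqslant 8L(2\sqrt{2L}+\sqrt{\gamma_0}\,k)^{-2}$ for $\mu\geqslant 0$ and $\lambda_k\leqslant (1+\sqrt{\min\{\gamma_0,\mu\}/L})^{-k}$ for $\mu>0$ carry over verbatim.

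The main obstacle will be the bookkeeping on the $x$-leg: \cref{lem:key-est} only applies to $h$, so the Nesterov-style cancellation that produced $-\frac{1}{2L}\|\nabla f(x_{k+1})-\nabla f(x_k)\|^2$ in the smooth proof must here be generated from $\nabla h$ alone, while $p_{k+1}$ contributes only a first-order subgradient bound. I must verify that the cross term arising from the $\nabla h(x_k)\to\nabla h(x_{k+1})$ realignment is exactly the one killed by $\alpha_k\beta_k=1/L$, with no stray $\|p_{k+1}\|^2$ term left behind; because $p_{k+1}$ is defined through the same relation $v_k-x_{k+1}-\beta_k\nabla h(x_k)-(x_{k+1}-x_k)/\alpha_k=\beta_k p_{k+1}$ that assembles $\mathcal G^x$ at $\bs x_{k+1}$, this alignment should hold cleanly, after which the remainder of the argument is a routine copy of the smooth template.
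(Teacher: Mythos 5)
Your proposal is correct and follows essentially the same route as the paper's proof: the same three-leg splitting of $\mathcal L_{k+1}-\mathcal L_k$, \cref{lem:key-est} applied to $h$ together with the subgradient inequality for $g$ so that everything is expressed through $q_{k+1}=\nabla h(x_{k+1})+p_{k+1}\in\partial f(x_{k+1})$, the strong Lyapunov property \cref{lem:LG-non} at $\bs x_{k+1}$, and the bound \cref{eq:lambdak-ex} for $\lambda_k$ carried over verbatim from \cref{thm:conv-ex1-ode-NAG}. The only cosmetic difference is how the cross term $\alpha_k\beta_k\dual{q_{k+1},\nabla h(x_{k+1})-\nabla h(x_k)}$ is absorbed: your weighting leaves residual coefficients $\tfrac{1}{2}\left(\alpha_k\beta_k-1/L\right)$ and $\tfrac{1}{2}\left(\alpha_k^2/\gamma_k-\alpha_k\beta_k\right)$ while the paper's Young inequality with weight $L$ leaves $\tfrac{L\alpha_k^2\beta_k^2}{2}+\tfrac{\alpha_k^2}{2\gamma_k}-\alpha_k\beta_k$ on $\nm{q_{k+1}}^2$, and under \cref{eq:split-ab} both collapse to $\mathcal L_{k+1}-\mathcal L_k\leqslant-\alpha_k\mathcal L_{k+1}$ with no stray $\nm{p_{k+1}}^2$ term, exactly as you anticipated.
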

\begin{proof}
	Based on the equivalent form \cref{eq:ex-HODE}, the proof is almost identical to a combination of that of  \cref{thm:conv-semi} and \cref{thm:conv-ex1-ode-NAG}.
	Let us start from the difference 
	\begin{align*}
	\mathcal L_{k+1} - \mathcal L_k
	={}&  \mathcal L(x_{k+1}, v_{k},\gamma_{k}) - \mathcal L(x_k, v_{k},\gamma_{k}) \\
	&+ \mathcal L(x_{k+1}, v_{k+1},\gamma_{k}) - \mathcal L(x_{k+1}, v_{k},\gamma_{k}) \\
	&+\mathcal L(x_{k+1}, v_{k+1},\gamma_{k+1}) - \mathcal L(x_{k+1}, v_{k+1},\gamma_{k}) \\
	:={}& {\rm I}_1 + {\rm I}_2 + {\rm I}_3,
	\end{align*}
	where the estimates for ${\rm I}_2$ and ${\rm I}_3$ keep unchanged
	\begin{align}
	{\rm I}_3 ={}& \alpha_k (\nabla_{\gamma}\mathcal L(\bs x_{k+1}), \mathcal G^{\gamma}(\bs x_{k+1})),\nonumber\\
	{\rm I}_2 \leqslant{}&  \alpha_k \dual{\nabla_v \mathcal L(\bs x_{k+1}), \mathcal G^v(\bs x_{k+1})} - \frac{\gamma_k}{2}\nm{ v_{k+1} - v_k}^2.\nonumber
	\end{align}
	Observing that 
	\begin{equation*}
	{\rm I}_1 =  \mathcal L(x_{k+1}, v_{k},\gamma_{k}) - \mathcal L(x_k, v_{k},\gamma_{k}) 
	= g(x_{k+1})- g(x_{k})+h(x_{k+1})- h(x_{k}),
	\end{equation*}
	we use \cref{lem:key-est} and the fact $p_{k+1}\in \partial g(x_{k+1})$ to estimate ${\rm I}_1$
	\begin{equation}\label{eq:I1}
	{\rm I}_1 \leqslant{}
	\dual{\nabla h(x_{k+1})+p_{k+1}, x_{k+1} - x_k} 
	- \frac{1}{2L}\nm{\nabla h(x_{k+1}) - \nabla h(x_k)}^2.
	\end{equation}
	For simplicity, set $q_{k+1} = p_{k+1}+\nabla h(x_{k+1})\in\partial f(x_{k+1})$. We use the discretization \cref{eq:ex-HODE} to replace $x_{k+1} - x_k$ and compare with the flow evaluated at $\bs x_{k+1}=(x_{k+1}, v_{k+1}, \gamma_{k+1})$:
	\begin{align*}
	\dual{q_{k+1}, x_{k+1} - x_k} 
	={}&  \alpha_k \dual{q_{k+1}, \mathcal G^x(\bs x_{k+1},\beta_k)} \\
	& +\alpha_k\beta_k \dual{q_{k+1}, \nabla h(x_{k+1}) - \nabla h(x_k)}\\
	& +\alpha_k \dual{q_{k+1}, v_k - v_{k+1}}.
	\end{align*}
	The last term is estimated in the same way as \cref{eq:df-diff-vk}, namely,
	\[
	\alpha_k \|q_{k+1}\|\| v_k - v_{k+1}\|\leqslant {}
	\frac{\alpha_k^2}{2\gamma_k} \|q_{k+1}\|^2 
	+ \frac{\gamma_k}{2}\| v_k - v_{k+1}\|^2.
	\]
	Thanks to the negative term in \cref{eq:I1}, we bound the second term by that
	\[
	\alpha_k\beta_k \dual{q_{k+1}, \nabla h(x_{k+1}) - \nabla h(x_k)}\leqslant 
	\frac{1}{2L}\nm{\nabla h(x_{k+1}) - \nabla h(x_k)}^2
	+\frac{L\alpha_k^2\beta_k^2}{2}\nm{q_{k+1}}^2.
	\]
	We now get the estimate for ${\rm I}_1$ as follows
	\begin{equation*}
	\begin{aligned}
	{\rm I}_1 \leqslant{}& \alpha_k \dual{q_{k+1}, \mathcal G^x(\bs x_{k+1},\beta_k)}  +\frac{\gamma_k}{2}\| v_k - v_{k+1}\|^2 +\left(\frac{L\alpha_k^2\beta_k^2}{2}
	+\frac{\alpha_k^2}{2\gamma_k}\right)\nm{q_{k+1}}^2.
	\end{aligned}
	\end{equation*}
	Putting all together and using \cref{lem:LG-non} implies
	\begin{equation}\label{eq:diff-Lk-split}
	\begin{split}
	\mathcal L_{k+1} - \mathcal L_k
	\leqslant {}&\alpha_k ( \partial \mathcal L(\bs x_{k+1}, q_{k+1}), \mathcal G(\bs x_{k+1},\beta_k,q_{k+1}))\\
	{}&\quad+\left(\frac{L\alpha_k^2\beta_k^2}{2}
	+\frac{\alpha_k^2}{2\gamma_k}\right)\nm{q_{k+1}}^2\\
	\leqslant {}&- \alpha_k \mathcal L_{k+1}+\left(\frac{L\alpha_k^2\beta_k^2}{2}
	+\frac{\alpha_k^2}{2\gamma_k}-\alpha_k\beta_k\right)\nm{q_{k+1}}^2\\
	={}& - \alpha_k \mathcal L_{k+1},
	\end{split}
	\end{equation}
	where in the last step we used the fact \cref{eq:split-ab}. This establishes \cref{eq:conv-split} and yields that $\mathcal L_k\leqslant \lambda_k\mathcal L_0$. Note the bound \cref{eq:lambdak-ex} for $\lambda_k$ still holds here and \cref{eq:conv-algo2} follows directly. We finally conclude the proof of this theorem.
\end{proof}
\begin{rem}\label{rem:alter-split}
To control the sub-gradient, we can choose
	\[
	\alpha_k = \sqrt{\frac{\gamma_k}{4L}},\quad \beta_k = \frac{1}{2L\alpha_k}.
	\]
	Plugging this into \cref{eq:diff-Lk-split} indicates
	\begin{equation}\label{eq:diff-Lk-split-rem}
	\mathcal L_{k+1} - \mathcal L_k
	\leqslant - \alpha_k \mathcal L_{k+1}
	-\frac{\alpha_k\beta_k}{2}\nm{q_{k+1}}^2.
	\end{equation}
	By slight modification of the proof, it follows that 
	\begin{equation}\label{eq:conv-split-rem-}
	\mathcal L_k+\frac{1}{4L}\sum_{i=0}^{k-1}
	\frac{\lambda_k}{\lambda_i} \nm{q_{i+1}}^2
	\leqslant\lambda_k\mathcal L_0.
	\end{equation}
	Following the estimate for $\lambda_k$ in \cref{thm:conv-ex1-ode-NAG}, we can derive that
	\[
	\lambda_k\leqslant \min\left\{
	32L\left(4\sqrt{2L}+\sqrt{\gamma_0}k\right)^{-2},\,
	\left(1+0.5\sqrt{\min\{\gamma_0,\mu\}/L}\right)^{-k}
	\right\}.
	\]
	Therefore, \cref{eq:conv-split-rem-} yields fast convergence for the norm of (sub-)gradient. However, the convergence bound is slightly worse than that of \cref{eq:conv-algo2}.
\end{rem}
\subsection{Methods using gradient mapping}
\label{sec:HNAG-gm}
In \cite{luo_chen_2019_from}, using the gradient mapping technique \cite{Nesterov_2012}, we presented two explicit schemes (supplemented with one gradient descent step) for NAG flow \cref{eq:NAG-sys} in composite case $f = h+g$, 
where $h\in\mathcal S_{\mu,L}^{1,1}$ with $\mu\geqslant 0$, $g$ is convex and lower-semicontinuous. Following the discussion in \S \ref{sec:methodfromNAG}, 
we show that those two schemes can also be viewed as explicit discretizations for H-NAG flow \cref{eq:HNAG}. 

Since the argument of those two methods are analogous, 
we only consider the following algorithm \cite[Algorithm 2]{luo_chen_2019_from}
\begin{equation}\label{eq:ex1-ode-NAGs}
\left\{
\begin{aligned}
\frac{y_k-x_{k}}{\alpha_k}={}& v_{k}-y_k,\\
\frac{v_{k+1}-v_{k}}{\alpha_k}={}&
\frac{\mu}{\gamma_k}(y_k-v_{k+1})
-\frac{1}{\gamma_k}\widehat{\nabla}f(y_k),\\
x_{k+1} = {}&y_k-\frac{1}{L}\widehat{\nabla}f(y_k),\\
\gamma_{k+1}={}&\gamma_k+\alpha_k(\mu-\gamma_{k+1}).
\end{aligned}
\right.
\end{equation}
Above, the gradient mapping 
$\widehat{\nabla}f(y_k)$ is defined as follows
\begin{equation}\label{eq:def-GF}
\begin{split}
\widehat{\nabla}f(y_k):={}&L
\Big(y_k -\proxi_{s g}\big[y_k-\nabla h(y_k)/L\big]\Big)\\
\in{}&\nabla h(y_k)+\partial g\Big(
\proxi_{g/L}\big[y_k-\nabla h(y_k)/L\big]\Big).
\end{split}
\end{equation}
If $L\alpha_k^2 = \gamma_k(1+\alpha_k)$, then by \cite[Theorem 4]{luo_chen_2019_from}, we have the accelerated convergence rate
\[
\mathcal L_{k}\leqslant 
\mathcal L_{0}\times
\min\left\{
4L\big(2\sqrt{L}+\sqrt{\gamma_0}\, k\big)^{-2},
\left(1+\sqrt{\min\{\gamma_0,\mu\}/L}\right)^{-k}
\right\},
\]
where $\mathcal L_k$ is defined in \cref{eq:Lk}. With a similar simplify process as that in Section \ref{sec:methodfromNAG}, we can eliminate the sequence $\{x_k\}$ and obtain the following equivalent form of \cref{eq:ex1-ode-NAGs}:
\[
\left\{
\begin{aligned}
\frac{v_{k+1}-v_{k}}{\alpha_k}={}&
\frac{\mu}{\gamma_k}(y_k-v_{k+1})
-\frac{1}{\gamma_k}\widehat{\nabla}f(y_k),\\
\frac{y_{k+1}-y_{k}}{\alpha_{k+1}}={}&
v_{k+1}-y_{k+1}
-\beta_{k+1}\widehat{\nabla}f(y_k),
\end{aligned}
\right.
\]
where $\beta_{k+1}=1/(L\alpha_{k+1})$. This is indeed an explicit scheme for H-NAG flow \cref{eq:HNAG}. Writing the previous iteration for $y_{k}$ before $v_{k+1}$ and replacing $y_k$ with $x_{k+1}$ yield
\[
\left\{
\begin{aligned}
\frac{x_{k+1}-x_{k}}{\alpha_{k}}={}&
v_{k}-x_{k+1}
-\beta_{k}\widehat{\nabla}f(x_{k}),\\
\frac{v_{k+1}-v_{k}}{\alpha_k}={}&
\frac{\mu}{\gamma_k}(x_{k+1}-v_{k+1})
-\frac{1}{\gamma_k}\widehat{\nabla}f (x_k),
\end{aligned}
\right.
\]
which is almost identical to \cref{eq:ex-HODE}. The difference is that the gradient mapping uses
$$
\widehat{\nabla}f(x_k) = \nabla h(x_k) + \partial g(\hat{x}_{k}),
$$
where $\hat{x}_{k} = \proxi_{g/L}\big[x_k-\nabla h(x_k)/L\big]$, while the scheme \cref{eq:ex-HODE} considers
$$
\nabla h(x_k) + \partial g(x_{k+1}) \quad\text{ and }\quad
\nabla h(x_{k+1}) + \partial g(x_{k+1}).
$$

\section{Conclusion and Future Work}
\label{sec:con}
In this paper, for convex optimization problem, we present a novel DIN system, which is called Hessian-driven Nesterov accelerated gradient flow. Convergence of the continuous trajectory and algorithm analysis are established via tailored Lyapunov functions satisfying the strong Lyapunov property (cf. \cref{eq:A}). It is proved that explicit schemes posses 
the optimal(accelerated) rate
\[
O\left(
\min
\left\{
1/k^2,\,\big(1+\sqrt{\mu/L}\big)^{-k}
\right\}
\right),
\]
and fast control of the norm of gradient is also obtained. 
This together with our previous work in  \cite{luo_chen_2019_from}, has already positively answered the fundamental question addressed in \cite{shi_acceleration_nodate}, that {\it we can  systematically and provably obtain accelerated methods via the numerical discretization of ordinary differential equations.}

In future work, we plan to extend our results along this line and develop a systematic framework of developing and analyzing first-order accelerated optimization methods.
	
	\section*{Acknowledgments}
	Hao Luo would like to acknowledge the support from China Scholarship Council (CSC) joint Ph.D. student scholarship (Grant 201806240132).
	
\bibliographystyle{siamplain}

\end{document}